\providecommand\@dotsep{5}
\def\listtodoname{List of Todos}
\def\listoftodos{\@starttoc{tdo}\listtodoname}
\numberwithin{equation}{section}
\def\R {{\rm I}\hskip -0.85mm{\rm R}}
\def\N {{\rm I}\hskip -0.85mm{\rm N}}
\newtheorem{theorem}{Theorem}[section]
\newtheorem{proposition}[theorem]{Proposition}
\newtheorem{lemma}[theorem]{Lemma}
\newtheorem{remark}{Remark}
\title[On a generalized timoshenko-kirchhoff equation]
{ On a generalized timoshenko-kirchhoff equation \\ with sublinear nonlinearities}
\author[J. R. Santos Jr.]{Jo\~ao R. Santos Junior}
\author[G. Siciliano]{Gaetano Siciliano}
\address[J. R. Santos Jr.]{\newline\indent Faculdade de Matem\'atica
\newline\indent 
Instituto de Ci\^{e}ncias Exatas e Naturais
\newline\indent 
Universidade Federal do Par\'a
\newline\indent
Avenida Augusto corr\^{e}a 01, 66075-110, Bel\'em, PA, Brazil}
\email{\href{mailto: joaojunior@ufpa.br }{joaojunior@ufpa.br}}
\address[G. Siciliano]{\newline\indent Departamento de Matem\'atica
\newline\indent 
Instituto de Matem\'atica e Estat\'istica
\newline\indent 
 Universidade de S\~ao Paulo 
\newline\indent 
Rua do Mat\~ao 1010,  05508-090, S\~ao Paulo, SP, Brazil }
\email{\href{mailto:sicilian@ime.usp.br}{sicilian@ime.usp.br}}
\thanks{Jo\~ao R. Santos was partially
supported by CNPq-Proc. 302698/2015-9, Brazil.
Gaetano Siciliano  was partially supported by
Fapesp and CNPq, Brazil. }
\subjclass[2010]{ 35J30, 	%Higher-order elliptic equations
35J50,  	%Variational methods for elliptic systems
35J57, %Boundary value problems for second-order elliptic systems
	47H10  	%Fixed-point theorems 
	}
\keywords{ Timoshenko-Kirchhoff type equation, sublinear nonlinearity, topological and variational methods.}
\begin{document}

\maketitle
\begin{abstract}
In this paper we consider a generalized fourth order nonlinear Kirchhoff equation in a bounded domain
in $\mathbb R^{N}, N\geq2$ under Navier boundary conditions and with sublinear nonlinearity.
We employ a change of variable which reduces the problem to a semilinear one.
Then variational and topological tools are used in order to prove the existence of a solution.
\end{abstract}
\maketitle

%------------------------------------------------------------------------------
\section{Introduction}

%------------------------------------------------------------------------------

This paper concerns with the existence  of solutions $u:\Omega\to \R$ to the following nonlocal problem 
\begin{equation}\label{P}\tag{P}
\left \{ \begin{array}{ll}
\Delta^{2}u-\text{div}\left(m\left (u, |\nabla u|_{2}^{2}\right)\nabla u\right)= f(x, u) & \mbox{in $\Omega$,}\\
u=\Delta u=0 & \mbox{on $\partial\Omega$,}
\end{array}\right.
\end{equation}
where $\Omega$ is a bounded and smooth domain in $\R^{N}, N\geq2$
and
$$m:\R\times[0,\infty) \to \R, \quad f:\Omega\times \R\to \R$$ 
are given data satisfying  suitable conditions.
Hereafter we denote with  $|\cdot|_{p}$  the usual $L^{p}(\Omega)-$norm.
We say here, once for all, that by a solution of the above problem
we mean a function $u_{*}\in H^{2}(\Omega)\cap H^{1}_{0}(\Omega)\cap L^{\infty}(\Omega)$
such that
$$
\int_{\Omega}\Delta u_{*}\Delta\varphi\,dx+\int_{\Omega} m(u_{*}(x) , |\nabla u_{*}|_{2}^{2}) \nabla u_{*} \nabla \varphi \,dx = \int_{\Omega}f(x, u_{*})\varphi \, dx, \ \forall \varphi\in H^{2}(\Omega)\cap H^{1}_{0}(\Omega).
$$
Our assumptions on $m$ and $f$ will guarantee that the above identity makes sense.

\medskip

The equation under study is a slight generalization
of the following one 
\begin{equation}\label{TK}%\tag{KP}
\left \{ \begin{array}{ll}
\Delta^{2}u -m\left( |\nabla u|_{2}^{2}\right)\Delta u= f(x, u) & \mbox{in $\Omega$,}\\
\Delta u=u=0 & \mbox{on $\partial\Omega$,}
\end{array}\right.
\end{equation}
 known as 
{\sl Timoshenko-Kirchhoff } plate equation. Without entering in details
here, we say that problem \eqref{TK} appears in Nonlinear Solid Mechanics and Mechanics of Materials.
In particular it describes the stationary solutions of an elastic plate (in case $N=2$), with fixed boundary,
subjected to small transversal vibrations and taking into account $(i)$ the Kirchhoff correction
to the classical wave equation of D'Alambert, $(ii)$ 
the correction for rotary inertia of the cross section of the plate and $(iii)$
the influence of shearing  strains introduced by Timoshenko.
This model was proposed by Arosio in \cite{Arosio1998, Arosio1999, Arosio2001} to which we refer the reader for the deduction
of the equation and the study of the local well-posedness of the associated Cauchy problem.
Actually, although the model proposed by Arosio deals mainly with the $1$ dimensional case (the beam equation) 
he studies an Abstract Cauchy
 problem in which the $N-$dimensional case, i.e. problem \eqref{TK}, falls down, see  \cite[equation (3.2)]{Arosio1999}.
% hyperbolic  equation
%\begin{equation*}%\label{KE}\tag{KE}
%\rho\displaystyle\frac{\partial^{2}u}{\partial
%t^{2}}-\biggl(\displaystyle\frac{P_{0}}{h}+\displaystyle\frac{E}{2L}\int^{L}_{0}\biggl|\displaystyle\frac{\partial
%u}{\partial
%x}\biggl|^{2}dx\biggl)\displaystyle\frac{\partial^{2}u}{\partial
%x^{2}}=0,
%\end{equation*}
%known in the literature as the 

As we already said, problem \eqref{TK} is a generalization of the 
``pure'' Kirchhoff equation which involves the Kirchhoff operator
$K(u)=\left(a+b |\nabla u|_{2}^{2}\right)\Delta u$.
 The  Kirchhoff equation  has been very studied
and is almost impossible to give an
exhaustive bibliography. 
However less results are present in the literature involving the 
operator $\Delta^{2}u +m(|\nabla u|_{2}^{2})\Delta u$.
We just cite the recent papers
 \cite{CF,cinesi} (see also the references therein)
 where  the authors study the existence of solutions for  an equation of type
$$\Delta^{2} u +m( |\nabla u|_{2}^{2})\Delta u +V(x)u=f(x,u),\ \  u\in H^{2}(\R^{N})$$
with suitable assumptions on $f$ and $V$.

\medskip

Summing up, the operator appearing in our problem \eqref{P}  can be seen  as a generalization of the 
Kirchhoff operator $K(u)=\left(a+b |\nabla u|_{2}^{2}\right)\Delta u$;
indeed beside the  correction with a fourth order operator
we are also assuming
 that the velocity of the displacement of the membrane
 is proportional to the gradient of the displacement with a factor $m$ depending not only
 on the variation of the ``superficial area'' of $\Omega$, but even on the same displacement:
$$
\textbf v=-m\left(u, |\nabla u|^{2}_{2} \right)\nabla u.
$$
Such a model is quite reasonable and  
there are also physical situations where rather than on $|\nabla u|_{2}$
the function $m$ depends on other quantities such as the $L^{1}$-norm 
of $u$, see e.g. \cite{CR}.
However,  to the best of our knowledge, 
there are few results concerning the case $m(u,|\nabla u|_{2}^{2})$.
We mention the recent paper \cite{SS} where the problem
$$-\text{div}(m( |\nabla u|_{2}^{2})\nabla u)=f(x,u)\ \text{in } \Omega, \quad u=0 \ \text{on }\partial\Omega$$
has been studied in a bounded domain. 
However in \cite{SS} one more assumptions on the function
$m$ with respect to the present paper was given, see condition \eqref{m_{2}} in Lemma \ref{le} below. 
We believe that using the method of this paper the assumption \eqref{m_{2}} can be removed in 
\cite{SS}.
%Now
%we do not need  this condition, and the reason seems to be the fact that now we have the biharmonic
%operator. See also Remark \ref{rem:comentario} in the next section.}

%Here $L$ is the length of the string, $h$ is the area of the cross-section, $E$ is the Young modulus of the material, $\rho$ is the mass density and $P_{0}$ is the initial tension. 
%This equation was introduced in \cite{kirchhoff} as a more realistic model then the classical  d'Alembert's wave equation. It  describes small transverse vibrations of an elastic string with fixed ends taking into account 
%the fact that the length is changing in time during the vibrations, and consequently the tension is also changing as the time
%goes on.
%In fact, since the length of the string is variable during the 
%vibrations, the tension changes with the time and depends of the $L^{2}-$norm of the gradient of the displacement $u$. 
%Problem \eqref{KP} is  nonlocal in nature due to
%the presence of the term $\int_{\Omega}|\nabla u|^{2}dx$. 
%which implies that the equation in (\ref{KP}) is no longer a pointwise identity. 
%This phenomenon causes some mathematical difficulties which make the study of such class of problems particularly interesting. 
%In the mathematical literature there are many results on
% problem \eqref{KP}; the interested reader can consult e.g.\cite{ACM,MR,ma} and the references therein.
%Results treating the problem from a variational point of  view can be found, for instance, in \cite{AA, AFP, FISJ, HZ, PZ} and their references.

\bigskip

Coming back to the present paper,  we will treat problem \eqref{P} separately 
in the cases
\begin{itemize}
\item[a)] $f=f(x)$, and
\item[b)]  $f=f(x,u)$ with sublinear growth.
\end{itemize}
We prefer to start with the particular case a) since some results
of this case will be used also in the more general case b).

Before to state  our main results, let us specify the assumptions on $m$. 
We first introduce the following convention:
for every $r\geq0$, we denote  with $m_{r}$ the map
$$m_{r}: t\in\R\mapsto m(t,r) \in \R.$$
We  suppose that $m: \R\times [0,\infty)\to(0,\infty)$ is a function such that: \medskip
\begin{enumerate}[label=(m\arabic*),ref=m\arabic*,start=0]
\item\label{m_{0}} is  continuous;  \medskip
\item\label{m_{1}} there is $\mathfrak m >0$ such that $m(t, r)\geq \mathfrak m $ for all $t\in\R$ and $r\in[0,\infty)$;  
 \medskip
%\item\label{m_{2}} \textcolor{red}{TIRAR>>>>} for each $r\in [0,+\infty)$ the map 
%$m_{r}:\R\to(0,+\infty)$ is strictly decreasing in $(-\infty,0)$  and strictly increasing in $(0,+\infty)$. \medskip 
\end{enumerate}
%For example,
%$m(t,r)= t^{2}(r^{p}+1)+1$, with $p>0$, or 
%$m(t,r)= t^{2}(e^{t^{2} e^{r}}+1)+1$ satisfies these conditions.
%Moreover these functions achieve their minimum $\mathfrak m=1$
%in points of type $(0,r).$

Then we have the following
\begin{theorem}\label{th:main1}
If  \eqref{m_{0}}-\eqref{m_{1}} hold, $0\not\equiv f\in L^{q}(\Omega)$ and $q>N/2$,  
then problem \eqref{P} with $f=f(x)$, has a nontrivial weak solution $u_{*}$.
\end{theorem}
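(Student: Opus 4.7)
The plan is to exploit \eqref{m_{1}} in order to reduce the fourth-order problem \eqref{P} to a coupled system formed by a linear Poisson equation (depending only on $f$) and a classical second-order semilinear problem parametrized by the scalar $r=|\nabla u|_{2}^{2}$; the nonlocal parameter $r$ is then recovered by a one-dimensional fixed point. More precisely, for every $r\geq 0$ I set $H_{r}(t):=\int_{0}^{t}m(s,r)\,ds$, which by \eqref{m_{1}} is strictly increasing with $H_{r}(0)=0$ and satisfies the pointwise identity $\nabla H_{r}(u)=m(u,r)\nabla u$. Integrating by parts the $m$-term in the weak formulation of \eqref{P}, $u$ solves \eqref{P} if and only if, setting $r=|\nabla u|_{2}^{2}$ and $\psi:=\Delta u-H_{r}(u)\in L^{2}(\Omega)$,
$$\Delta\psi=f\text{ in }\Omega,\ \psi=0\text{ on }\partial\Omega,\qquad -\Delta u+H_{r}(u)=-\psi\text{ in }\Omega,\ u=0\text{ on }\partial\Omega.$$
The Poisson problem admits a unique weak solution in $H^{2}\cap H^{1}_{0}$, and since $q>N/2$ elliptic $L^{q}$-regularity combined with Sobolev embeddings gives $\psi\in L^{\infty}(\Omega)$ with $\psi\not\equiv 0$.

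Denote the second (semilinear) problem by $(Q_{r})$ and set $\widetilde H_{r}(t):=\int_{0}^{t}H_{r}(s)\,ds$, which satisfies $\widetilde H_{r}(t)\geq \mathfrak m t^{2}/2$. Strict monotonicity of $H_{r}$ makes the functional
$$J_{r}(u):=\frac{1}{2}\int_{\Omega}|\nabla u|^{2}\,dx+\int_{\Omega}\widetilde H_{r}(u)\,dx+\int_{\Omega}\psi u\,dx$$
strictly convex, coercive and weakly lower semicontinuous on $H^{1}_{0}(\Omega)$, so $(Q_{r})$ admits a unique weak solution $u_{r}\in H^{1}_{0}$. A comparison with the constant sub/supersolutions $\pm|\psi|_{\infty}/\mathfrak m$ gives the uniform bound $|u_{r}|_{\infty}\leq|\psi|_{\infty}/\mathfrak m$, whence $u_{r}\in H^{2}\cap L^{\infty}$ by elliptic regularity. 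Testing $(Q_{r})$ with $u_{r}$ and using $H_{r}(t)t\geq \mathfrak m t^{2}$ together with Poincar\'e yields $|\nabla u_{r}|_{2}\leq C|\psi|_{2}$ uniformly in $r$, so the map $\Phi(r):=|\nabla u_{r}|_{2}^{2}$ sends a compact interval $[0,R]$ into itself. Any fixed point $r_{*}$ of $\Phi$ produces, via the reduction, a weak solution $u_{*}:=u_{r_{*}}$ of \eqref{P}; it is nontrivial because $\psi\not\equiv 0$ prevents $u\equiv 0$ from solving $(Q_{r_{*}})$.

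The main obstacle is the continuity of $\Phi$ on $[0,R]$, which is exactly what the intermediate value theorem requires. Given $r_{n}\to r$, the uniform $L^{\infty}$-bound on $u_{r_{n}}$ combined with the joint continuity \eqref{m_{0}} of $m$ allows, by Lebesgue's dominated convergence theorem, the passage to the limit in the $r$-dependent nonlinearity $H_{r_{n}}(u_{r_{n}})$ inside the weak form of $(Q_{r_{n}})$; uniqueness for $(Q_{r})$ then forces the whole sequence $u_{r_{n}}$ to converge to $u_{r}$ in $H^{1}_{0}$. Testing each $(Q_{r_{n}})$ against its own solution and again invoking dominated convergence, one gets $|\nabla u_{r_{n}}|_{2}^{2}\to |\nabla u_{r}|_{2}^{2}$, i.e.\ the continuity of $\Phi$, and the proof is complete.
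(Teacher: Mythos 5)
Your overall strategy is exactly the paper's: the substitution $w=M_r(u)-\Delta u$ (your $\psi$ is $-w$), the resulting triangular system, the variational treatment of the semilinear equation, the uniform bounds, and the one-dimensional fixed point for $r\mapsto|\nabla u_r|_2^2$. The one genuine gap is the claim that the functional
$$J_{r}(u)=\frac{1}{2}\int_{\Omega}|\nabla u|^{2}\,dx+\int_{\Omega}\widetilde H_{r}(u)\,dx+\int_{\Omega}\psi u\,dx$$
directly yields a unique \emph{weak solution} of $(Q_r)$. Hypotheses \eqref{m_{0}}--\eqref{m_{1}} impose no upper growth bound on $m$, so $H_r=M_r$ and its primitive can grow arbitrarily fast; consequently $\int_{\Omega}\widetilde H_{r}(u)\,dx$ may equal $+\infty$ on a dense subset of $H^{1}_{0}(\Omega)$, and $J_r$ is neither finite-valued nor G\^ateaux differentiable there. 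The direct method still produces a minimizer of the extended-real-valued functional, but a minimizer of a non-$C^{1}$ functional is not automatically a critical point, so at that stage you cannot test the equation; and your $L^{\infty}$ bound ``by comparison with constant sub/supersolutions'' presupposes that $u_r$ already solves the equation, which is circular.

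The paper closes this gap by truncation: it picks $\tau_1<0<\tau_2$ with $M_r(\tau_1)=-|w|_{\infty}$, $M_r(\tau_2)=|w|_{\infty}$, replaces $M_r$ by the bounded nondecreasing truncation $M_{r,[\tau_1,\tau_2]}$, minimizes the resulting $C^{1}$, strictly convex, coercive functional, and then invokes \cite[Lemma 2.4]{P1} --- which is stated for \emph{minimizers}, not for solutions --- to get $\tau_1\le z_r\le\tau_2$, so that the minimizer of the truncated problem is a genuine critical point of the original one. Your bound $|u_r|_{\infty}\le|\psi|_{\infty}/\mathfrak m$ then follows from $|M_r(t)|>\mathfrak m|t|$. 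With that repair the rest of your argument (uniform $H^{1}_{0}$ bound, continuity of $\Phi$ via dominated convergence and uniqueness, fixed point) goes through essentially as in the paper. A minor further point: to return from the system to the fourth-order weak formulation you need $u_r\in W^{3,2}(\Omega)$, not just $H^{2}(\Omega)$; this follows from $-\Delta u_r=-\psi-M_r(u_r)\in H^{1}_{0}(\Omega)$ and the Agmon--Douglis--Nirenberg estimates, as the paper records. Also, for $q>N/2$ but $q<2$ the solution of the Poisson problem need not lie in $H^{2}(\Omega)$; what is actually used is only $\psi\in H^{1}_{0}(\Omega)\cap C(\overline\Omega)$.
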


\noindent To deal with the general case  $f=f(x,u)$
we  introduce some notations.
Let $\lambda_{1}$ be the first eigenvalue of $-\Delta$ in $H^{1}_{0}(\Omega)$ and $\gamma$ denotes a positive constant (independent of $h\in L^{\infty}(\Omega)$ and $u\in C^{1,\alpha}(\overline\Omega)$, $0<\alpha<1$) such that
$$
\|u\|_{C^{1, \alpha}(\overline \Omega)}\leq \gamma |h|_{\infty},
$$
where
\begin{equation*}
\left \{ \begin{array}{ll}
-\Delta u= h(x) & \mbox{in $\Omega$,}\\
u=0 & \mbox{on $\partial\Omega$.}
\end{array}\right.
\end{equation*}
We then assume that $f:\Omega\times \R\to\R$ is  a Carath\'eodory function satisfying:
\begin{enumerate}[label=(f\arabic*),ref=f\arabic*,start=1]
\item \label{f_1}$f(x, 0)\neq 0$, \smallskip
 \item\label{f_2} there exists $\mu\in L^{\infty}(\Omega)$, $\delta\in(0,1]$ and $\nu\in (0, \min\{\lambda_{1}^{(1+3\delta)/2}/|\Omega|^{1-\delta}, \mathfrak m^{\delta}/\gamma\})$, such that 
  $$ 
  |f(x, t)|\leq \mu(x) + \nu |t|^{\delta }\quad \mbox{ a.e. in $\Omega$ and $\forall t\in \R$},
  $$
%  where $\delta\in (0,1]$.
  \item\label{f_3} there is $\theta\in (0, \lambda_{1}^{2})$ such that 
  $$
  |f(x, t_{1})-f(x, t_{2})|\leq \theta|t_{1}-t_{2}| \quad \mbox{a.e. in $\Omega$ and $\forall t_{1},t_{2}\in \R$}.
  $$
% for all $t_{1}, t_{2}\in \R$. 
  \end{enumerate}
%  \textcolor{red}{Actually, as it will be evident from the proof, condition \eqref{f_2} can be relaxed in the sense
%that if $\delta\in(0,1)$ we just need to assume on the parameter $\nu$ that it belongs to
%$(0,\mathfrak m^{\delta}/\gamma)$. We have written \eqref{f_2} in that form just for the
%sake of simplicity.}
  
Then we have the following
\begin{theorem}\label{th:main2}
If  \eqref{m_{0}}-\eqref{m_{1}} and \eqref{f_1}-\eqref{f_3} hold, 
then problem \eqref{P} with $f=f(x,u)$, has a nontrivial weak solution $u_{*}$.

\end{theorem}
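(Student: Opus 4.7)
I will reduce \eqref{P} to a fixed point equation and apply Banach's contraction principle, using Theorem~\ref{th:main1} to supply existence at each iteration and the Lipschitz bound \eqref{f_3} to supply contractivity. For $w$ in a closed ball $\overline B_{R}$ of a suitable Banach space (for instance $H^{2}\cap H^{1}_{0}(\Omega)$ with norm $|\Delta\cdot|_{2}$, or $C^{1,\alpha}(\overline\Omega)$), set $T(w):=u_{w}$, where $u_{w}$ is a solution of
\[
\Delta^{2}u-\text{div}\bigl(m(u,|\nabla u|_{2}^{2})\nabla u\bigr)=f(x,w)\ \text{in }\Omega,\qquad u=\Delta u=0\text{ on }\partial\Omega,
\]
provided by Theorem~\ref{th:main1} (which applies since \eqref{f_2} and $w\in L^{\infty}$ give $f(\cdot,w)\in L^{\infty}\subset L^{q}$ for every $q>N/2$). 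Any fixed point $u_{*}=T(u_{*})$ solves \eqref{P}, and must be nontrivial because $u_{*}\equiv 0$ would force $f(\cdot,0)\equiv 0$, contradicting \eqref{f_1}.

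\smallskip

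The two key estimates are the self-map and the contraction. Testing the equation for $u_{w}$ against $u_{w}$ and using \eqref{m_{1}} yields $|\Delta u_{w}|_{2}^{2}+\mathfrak m\,|\nabla u_{w}|_{2}^{2}\le\int_{\Omega}f(x,w)u_{w}$; combining with \eqref{f_2}, H\"older's inequality ($|w|_{2\delta}^{\delta}\le|\Omega|^{(1-\delta)/2}|w|_{2}^{\delta}$) and the biharmonic Poincar\'e-type bound $|u|_{2}\le\lambda_{1}^{-1}|\Delta u|_{2}$ on $H^{2}\cap H^{1}_{0}$, the explicit threshold $\nu<\lambda_{1}^{(1+3\delta)/2}/|\Omega|^{1-\delta}$ in \eqref{f_2} is precisely what enables the choice of an $R>0$ with $T(\overline B_{R})\subseteq\overline B_{R}$ (the alternative constraint $\nu<\mathfrak m^{\delta}/\gamma$ plays the analogous role if the same argument is run in the $C^{1,\alpha}$ norm, via the regularity bound $\|u\|_{C^{1,\alpha}}\le\gamma|h|_{\infty}$ stated in the preamble for $-\Delta u=h$). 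For the contraction, setting $u_{i}=T(w_{i})$, subtracting the equations and testing with $u_{1}-u_{2}$ produces the dominant bound
\[
\int_{\Omega}\bigl(f(x,w_{1})-f(x,w_{2})\bigr)(u_{1}-u_{2})\le\theta\,|w_{1}-w_{2}|_{2}\,|u_{1}-u_{2}|_{2}\le\frac{\theta}{\lambda_{1}^{2}}|\Delta(w_{1}-w_{2})|_{2}\,|\Delta(u_{1}-u_{2})|_{2}
\]
via \eqref{f_3} and two applications of the Poincar\'e inequality. Since $\theta/\lambda_{1}^{2}<1$, Young's inequality then produces a strict $H^{2}$-contraction, and Banach's theorem furnishes the desired $u_{*}$.

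\smallskip

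\textbf{Main obstacle.} The clean contraction picture above is muddied by two points: Theorem~\ref{th:main1} does not guarantee uniqueness, so $T$ is a priori set-valued; and subtracting the two equations generates an extra term
\[
\int_{\Omega}\bigl(m(u_{1},|\nabla u_{1}|_{2}^{2})-m(u_{2},|\nabla u_{2}|_{2}^{2})\bigr)\nabla u_{2}\,\nabla(u_{1}-u_{2})
\]
which cannot be controlled directly from mere continuity of $m$. The natural remedy I would adopt is to replace $T$ by the \emph{linearized} map $T'(w)=u$ defined by the single-valued Lax--Milgram problem $\Delta^{2}u-\text{div}(m(w,|\nabla w|_{2}^{2})\nabla u)=f(x,w)$ (coercivity comes from \eqref{m_{0}}--\eqref{m_{1}}), whose fixed points still solve \eqref{P}. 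The corresponding extra term $\int(m(w_{1},\cdot)-m(w_{2},\cdot))\,\nabla u_{2}\,\nabla(u_{1}-u_{2})$ is then absorbed into the contraction by exploiting uniform continuity of $m$ on the compact set swept by $(w(x),|\nabla w|_{2}^{2})$ as $w$ varies in $\overline B_{R}$.
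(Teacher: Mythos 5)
Your overall architecture (freeze part of the nonlinearity, solve, then look for a fixed point) is in the right family, but the specific route you choose --- a Banach contraction in a function space for the map $w\mapsto u_{w}$ --- breaks down exactly at the point you flag as the ``main obstacle,'' and your proposed remedy does not repair it. After passing to the linearized map $T'(w)=u$ with coefficient $m(w,|\nabla w|_{2}^{2})$, the difference term
\[
\int_{\Omega}\bigl(m(w_{1},|\nabla w_{1}|_{2}^{2})-m(w_{2},|\nabla w_{2}|_{2}^{2})\bigr)\nabla u_{2}\,\nabla(u_{1}-u_{2})\,dx
\]
must be bounded by $k\,\|w_{1}-w_{2}\|\,\|u_{1}-u_{2}\|$ with $k$ small for the contraction to close. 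Uniform continuity of $m$ on the compact set swept by $(w(x),|\nabla w|_{2}^{2})$ only yields a modulus-of-continuity bound $\omega(|w_{1}(x)-w_{2}(x)|+\cdots)$, and $\omega(s)$ need not be $O(s)$: under \eqref{m_{0}}--\eqref{m_{1}} the function $m$ is merely continuous, so no Lipschitz --- let alone small-Lipschitz --- control is available, and Theorem \ref{th:main2} imposes no smallness condition on the oscillation of $m$. Even if $m$ were Lipschitz, absorbing this term would require its Lipschitz constant to be small relative to $|\nabla u_{2}|_{2}$ and the other constants, which is an additional hypothesis not present in the statement. This is a genuine gap that cannot be filled with the stated assumptions.

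The paper sidesteps the difficulty by decoupling only the \emph{scalar} nonlocality: for each number $r\geq 0$ it solves the problem with coefficient $m_{r}(u)=m(u,r)$ (reduced, via $z=u$, $w=M_{r}(u)-\Delta u$, to the second-order system \eqref{Sys}, which is solved by a Picard iteration between its two equations; the thresholds on $\nu$ in \eqref{f_2} give the $H^{1}_{0}$ and $L^{\infty}$ bounds on the iterates, and $\theta<\lambda_{1}^{2}$ in \eqref{f_3} gives uniqueness), and then seeks a fixed point of the one-dimensional map $S(r)=|\nabla u_{r}|_{2}^{2}$. Since $S$ is a continuous real function with $S(0)>0$ and $S(r)<r$ for large $r$, the intermediate value theorem produces $r_{*}$ with $S(r_{*})=r_{*}$; crucially, continuity of $S$ needs only continuity of $m$ plus dominated convergence, which is precisely the regularity available. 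If you want to salvage your scheme, freeze the scalar $|\nabla u|_{2}^{2}$ rather than the whole function $w$ inside $m$; your contraction estimate based on \eqref{f_3} then becomes essentially the uniqueness/convergence argument of the paper's Proposition \ref{p1}, and the troublesome $m$-difference term never appears.
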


Some comments now are in order. 

First of all, as we will see the weak solutions we find are indeed in $C^{1,\alpha}(\overline \Omega)$.
However, arguing as in \cite[Theorem 2.1]{BS}, these solutions are indeed classical.

Moreover  our approach in proving the results mixes variational and fixed points methods.
Indeed the strategy in proving both our theorems is
\begin{itemize}
\item[Step 1:] to find solutions for an auxiliary 
problem depending on a parameter $r\geq0$, let us say $u_{r}$; here variational tools are used.
\item[Step 2:] prove estimates on $u_{r}$ and obtain a solution of \eqref{P} by topological tools.
\end{itemize}
We aim also to show that a simple change of variable
(used also in \cite{SS}) can transform our original problem into a second order semilinear system
(this is used in Step 1). Of course the change of variables is independent on our assumptions
on $f$.
So we believe that these techniques can be used to treat
also other type of equations.

For what concern the function $m$, observe that in contrast to the case in which
$m$  depend only on $|\nabla u|_{2}$,  no assumptions
on the growth at infinity  of $m$ with respect to  $|\nabla u|_{2}^{2}$ are made
here.

Finally we leave as an interesting and open problem the case
in which $m$ might vanish, which correspond in some sense to a degenerate operator.

%\begin{remark}
%Here, based on a result in \cite{BO}, we will assume a sublinear assumption on $f$
%and we will use  topological tools. However, depending on other type of assumptions on the nonlinearity $f$,
%other methods, for instance variational, can be  employed to solve the local equation in which the problem is transformed after the change of variable (and then recover a solution of the original equation).
%\end{remark}

\medskip

%The main novelty of our approach is that the proofs are based on a simple ``change of variable'' device
%which seems not to have been used for these kind of nonlocal equations. 
%With the use of the change of variable, the equation \eqref{P} is reduced to a ``local'' semilinear equation,
%for which various tools are available to solve it. 
%
%For other type of change of variables in this type of problems, see also \cite{ACM,AA}.

%\begin{remark}
% We believe that change of variable of this type can be used also to deal with other nonlocal equations.
%\end{remark}

The paper is organized as follows. 

In Section \ref{sec:prelim} we 
present the general approach to solve problem \eqref{P}
and give a useful lemma (see Lemma \ref{le}).
In Section \ref{sec:f(x)} we prove our  result in the case $f=f(x)$, i.e. Theorem \ref{th:main1}.
In Section \ref{sec:general} we consider the general case $f=f(x,u)$ proving Theorem \ref{th:main2}.

\medskip

In all the paper we denote with $W^{m,p}(\Omega)$ the usual Sobolev spaces. Whenever $p=2$
we use the notation $H^{m}(\Omega)$. Finally $H^{1}_{0}(\Omega)$ is the closure of the test functions
with respect to the norm in $H^{1}(\Omega)$.

\section{Preliminaries}\label{sec:prelim}

We attack problem \eqref{P}, in both cases $f=f(x)$ and $f=f(x,u)$, in the following way.
Firstly, for every fixed $r\geq0$, we consider the auxiliary problem
\begin{equation*}%\label{Pr}\tag{$P_{r}$}
\left \{ \begin{array}{ll}
\Delta^{2}u-\text{div}\left(m_{r}\left (u\right)\nabla u\right)= f(x,u) & \mbox{in $\Omega$,}\\
u=\Delta u=0 & \mbox{on $\partial\Omega$,}
\end{array}\right.
\end{equation*}
associated to \eqref{P} for which we prove the existence of a unique solution $u_{r}$.
Secondly, we show that the map $$S: r\mapsto \int_{\Omega}|\nabla u_{r}|^{2}dx$$ 
has a fixed point $r_{*}$, which of course gives a solution $ u_{*} := u_{r_{*}}$
of the original problem \eqref{P}.

\medskip

\medskip

Let us define the map $M:(t, r)\in\R\times[0,+\infty)\mapsto \int_{0}^{t}m(s, r)ds\in  \R$; 
it will be convenient  also to introduce the notation, for every $r\geq0$:
$$M_{r}:=M(\cdot, r):\R\to \R.$$
In the next lemma we list some properties of $M$ which will have an important role in the study of problem \eqref{P}
in the next two sections.

\begin{lemma}\label{le}
Assume \eqref{m_{0}}-\eqref{m_{1}}. Then, \smallskip
\begin{enumerate}
  \item[$(a)$]  for each $r\in [0,\infty)$ the map $M_{r}:\R \to\R $ is a strictly increasing diffeomorphism
  (in particular it satisfies the ``sign condition'' $M_{r}(t)t\geq 0$ for all $t\in \R$) and moreover $|M_{r}(t)|>\mathfrak{m}|t|$
  for every $t\neq 0$.
  \medskip
   
   \item[$(b)$] The map $M$ is continuous. \medskip
     
     \item[$(c)$] For each $r\in [0,\infty),$ the inverse map $ M^{-1}_{r}$  is  Lipschitz continuous  with Lipschitz constant $\mathfrak m^{-1}$.
     In particular $|M_{r}^{-1}(t)|\leq \mathfrak m^{-1}|t|$ for all $t\in\R$.\medskip
%  \item[$(b)$] if $t_{1}, t_{2}\in \R$ and $r\geq 0$, $(M_{r}(t_{1})-M_{r}(t_{2}))(t_{1}-t_{2})\geq 0$. Moreover, for each
%$t\neq 0$, $|M_{r}(t)|>\mathfrak{m}|t|$. 
  % $t_{n}\to t_{0}$ and $r_{n}\to r_{0}$ then $M_{r_{n}}(t_{n})\to M_{r_{0}}(t_{0})$;
\end{enumerate}

Assume now also that
\begin{enumerate}[label=(m\arabic*),ref=m\arabic*,start=2] 
\item\label{m_{2}} for each $r\in [0,+\infty)$ the map 
$m_{r}:\R\to(0,+\infty)$ is strictly decreasing in $(-\infty,0)$  and strictly increasing in $(0,+\infty)$. \medskip 
\end{enumerate}
Then
 \begin{enumerate}
  \item[$(d)$]   if $t_{n}\to t_{0}$ and $r_{n}\to r_{0}$ then $M_{r_{n}}^{-1}(t_{n})\to M_{r_{0}}^{-1}(t_{0})$.
  %In particular, for each fixed $s\in\R$, application $r\to M_{r}^{-1}(s)$ is continuous.   
 % For each fixed $s\in\R$, the map  $r\mapsto M_{r}^{-1}(s)$ is (continuous and) strictly decreasing; \medskip
%  \item[$e)$] for each fixed $s\in\R$, application $r\to M_{r}^{-1}(s)$ is decreasing.
 \medskip
      \item[$(e)$] For each $r\in[0,+\infty)$, the map $t\mapsto M_{r}^{-1}(t)/t$ is (continuous and) strictly decreasing in $(0,+\infty)$ and strictly increasing  in $(-\infty, 0)$. 
\end{enumerate}
\end{lemma}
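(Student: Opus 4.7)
The plan is to treat the five parts in sequence, working from the integral definition $M_r(t)=\int_{0}^{t}m(s,r)\,ds$ together with the stated hypotheses on $m$. Parts (a)--(c) are essentially a careful application of the Fundamental Theorem of Calculus combined with the inverse function theorem. For (a), the identity $M_r'(t)=m_r(t)\geq\mathfrak{m}>0$ from \eqref{m_{1}} shows $M_r\in C^{1}(\R)$ is strictly increasing; the lower bound $m\geq\mathfrak{m}$ applied inside the integral immediately gives the sign condition and $|M_r(t)|\geq\mathfrak{m}|t|$, which in particular forces $M_r(t)\to\pm\infty$ as $t\to\pm\infty$, so $M_r$ is surjective onto $\R$ and the inverse function theorem upgrades it to a $C^{1}$ diffeomorphism. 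Part (c) is then the same theorem applied to the inverse: $(M_r^{-1})'(u)=1/m_r(M_r^{-1}(u))\in(0,\mathfrak{m}^{-1}]$, so $M_r^{-1}$ is $\mathfrak{m}^{-1}$-Lipschitz, and $M_r^{-1}(0)=0$ upgrades this to $|M_r^{-1}(t)|\leq\mathfrak{m}^{-1}|t|$.

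For (b), continuity of $M$ on $\R\times[0,\infty)$ is standard: given $(t_n,r_n)\to(t_0,r_0)$, uniform continuity of $m$ on a compact set containing the relevant range of $(s,r_n)$ yields $m(\cdot,r_n)\to m(\cdot,r_0)$ uniformly on a compact interval enclosing all the $t_n$, and this passes through the integral. Part (d) then follows by a compactness argument: with $s_n:=M_{r_n}^{-1}(t_n)$, the estimate from (c) makes $\{s_n\}$ bounded, so any subsequence has a convergent sub-subsequence $s_{n_k}\to s^{*}$. Applying continuity of $M$ from (b) to the identity $M(s_{n_k},r_{n_k})=t_{n_k}$ gives $M(s^{*},r_0)=t_0$, hence $s^{*}=M_{r_0}^{-1}(t_0)$; uniqueness of the limit forces the full sequence to converge. (Note that (d) in fact uses only \eqref{m_{0}}--\eqref{m_{1}}, not \eqref{m_{2}}.)

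For (e), substitute $s=M_r^{-1}(t)$ so that $M_r^{-1}(t)/t=s/M_r(s)$; since $M_r$ is an orientation-preserving diffeomorphism mapping $(0,\infty)$ onto itself and $(-\infty,0)$ onto itself, monotonicity in $t$ on each half-line is equivalent to monotonicity of $s/M_r(s)$ in $s$ on the same half-line. A direct computation gives
\[
\frac{d}{ds}\!\left(\frac{s}{M_r(s)}\right)=\frac{M_r(s)-s\,m_r(s)}{M_r(s)^{2}}=\frac{1}{M_r(s)^{2}}\int_{0}^{s}\bigl(m(\sigma,r)-m(s,r)\bigr)\,d\sigma,
\]
and the sign of the numerator follows from \eqref{m_{2}}. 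For $s>0$ the integrand is negative on $(0,s)$ (strict increase of $m_r$ on $(0,\infty)$), so the derivative is strictly negative and $s\mapsto s/M_r(s)$ is strictly decreasing. For $s<0$ rewrite the integral as $-\int_{s}^{0}(m(\sigma,r)-m(s,r))\,d\sigma$; by strict decrease of $m_r$ on $(-\infty,0)$ one has $m(\sigma,r)<m(s,r)$ for $s<\sigma<0$, so the integrand there is negative and the bracketed integral is strictly negative, making the whole expression strictly positive.

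The main technical obstacle is quite mild and lies in (e), where the sign analysis on $(-\infty,0)$ must be tracked carefully; the remaining parts are routine given the Fundamental Theorem of Calculus, the inverse function theorem, and a standard bounded-sequence-with-unique-cluster-point argument.
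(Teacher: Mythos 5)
Your proof is correct in substance but is considerably more self-contained than the paper's. The paper only writes out two things: the bound $|M_{r}(t)|>\mathfrak m|t|$ (by splitting the integral according to the sign of $t$) and part (b), which it proves by decomposing the difference over $A_{n}=\min\{t_{0},t_{n}\}$ and $B_{n}=\max\{t_{0},t_{n}\}$ and invoking dominated convergence; everything else --- the diffeomorphism statement in (a) and parts (c), (d), (e) --- is delegated to \cite[Lemma 2.1]{SS}. You instead prove all five parts from the definition: (a) and (c) via the Fundamental Theorem of Calculus and the inverse function theorem, (b) via uniform continuity of $m$ on a compact rectangle (which is cleaner than the paper's dominated-convergence argument and gives the same conclusion), (d) via the bounded-sequence/unique-cluster-point argument using (b) and (c), and (e) via the explicit derivative of $s/M_{r}(s)$ with the sign read off from \eqref{m_{2}}. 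Your observation that (d) needs only \eqref{m_{0}}--\eqref{m_{1}} is correct and is a genuine (if minor) sharpening of the statement, which places (d) after the additional hypothesis \eqref{m_{2}}. All of these arguments are sound.

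One discrepancy deserves attention: in (a) you establish $|M_{r}(t)|\geq\mathfrak m|t|$, whereas the statement asserts the strict inequality $|M_{r}(t)|>\mathfrak m|t|$ for $t\neq 0$. You should be aware that the strict version is in fact not a consequence of \eqref{m_{0}}--\eqref{m_{1}} alone (take $m\equiv\mathfrak m$, so that $M_{r}(t)=\mathfrak m t$); the paper's own step $\int_{t}^{0}m_{r}(s)\,ds>-\mathfrak m t$ is unjustified for the same reason. So your non-strict inequality is the correct provable statement, but you should either say explicitly that the strict form fails in general, or note that the non-strict form suffices for the later applications (e.g.\ Remark \ref{rem1} only needs $|z_{r}|_{\infty}\leq\mathfrak m^{-1}|w|_{\infty}$). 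As written, a reader comparing your proof to the lemma would see an unexplained mismatch.
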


%\begin{remark}\label{rem:comentario}
%Before giving the proof we observe the following.
%\begin{itemize}
%\item[(1)]
 In the remaining of the paper we will use just itens (a) and (b) of this Lemma.
We have preferred to include also condition \eqref{m_{2}} and itens (c), (d), (e)
just to have a complete list of properties which may be useful for future references.
%\item[(2)] Property \eqref{m_{2}} was fundamental in \cite{SS} in order to prove that the map
%$S$ defined above is continuous. Here since we will deal with a system, the situation appears better.
%\end{itemize}
%\end{remark}

\begin{proof}
(a) For the first part see \cite[Lemma 2.1]{SS}. 
%The sign condition follows then by the fact that $M_{r}(0)=0$.
Moreover, if $t<0$, from \eqref{m_{1}}, 
$$
|M_{r}(t)|=-\int_{0}^{t}m_{r}(s)ds=\int_{t}^{0}m_{r}(s)ds>-\mathfrak{m}t=\mathfrak{m}|t|.
$$
The argument  is similar if $t>0$.

\noindent (b) Let $t_{n}\to t_{0}$ and $r_{n}\to r_{0}$. If $t_{0}=0$, then
$$
|M_{r_{n}}(t_{n})|\leq |t_{n}|\max_{t\in [-\varepsilon, \varepsilon]\atop r\in [0, r_{0}+\varepsilon]}m(t, r),
$$
for all $n\geq n_{0}$ and some $\varepsilon>0$. Therefore,
$$
M_{r_{n}}(t_{n})\to 0=M_{r_{0}}(0).
$$

Suppose now $t_{0}>0$. Denote with $A_{n}=\min\{t_{0}, t_{n}\}$ and $B_{n}=\max\{t_{0}, t_{n}\}$. Note that there are $0<T_{1}<t_{0}<T_{2}$ such that if $n\geq n_{0}$ then $T_{1}\leq A_{n}\leq t_{0}\leq B_{n}\leq T_{2}$. Thus,
$$
|M_{r_{n}}(t_{n})-M_{r_{0}}(t_{0})|=\left|\int_{0}^{A_{n}}[m(t, r_{n})-m(t, r_{0})]dt-\int_{A_{n}}^{B_{n}}m(t, s_{n})dt\right|,
$$
where $s_{n}= r_{n}$ if $t_{n} \geq t_{0}$ and $s_{n}=r_{0}$ if $t_{0}>t_{n}.$
Consequently
\begin{equation*}
|M_{r_{n}}(t_{n})-M_{r_{0}}(t_{0})|\leq\int_{0}^{t_{0}}|m(t, r_{n})-m(t, r_{0})|\chi_{[0, A_{n}]}(t)dt+(B_{n}-A_{n})\max_{t\in[T_{1}, T_{2}]\atop s\in [T_{3},T_{4}]}m(t, s), 
\end{equation*}
for all 
$n\geq n_{0}$ where $\{s_{n}\}\subset [T_{3},T_{4}]$ and $\chi_{[0, A_{n}]}$ is the characteristic function of the interval
 $[0, A_{n}]$.

Since
$$
|m(t, r_{n})-m(t, r_{0})|\chi_{[0, A_{n}]}(t)\to 0, \ \forall \ t\in[0, t_{0}]
$$
and
$$
|m(t, r_{n})-m(t, r_{0})|\chi_{[0, A_{n}]}(t)\leq 2\max_{t\in [0, t_{0}]\atop r\in [0, r_{0}+\varepsilon]}m(t, r),
$$
for all $n\geq n_{0}$ and some $\varepsilon>0$, it follows by the Lebesgue Dominated Convergence Theorem that
$$
M_{r_{n}}(t_{n})\to M_{r_{0}}(t_{0}).
$$
The argument is similar for $t_{0}<0$.

The proof of (c)-(d)-(e) is given in \cite[Lemma 2.1]{SS}.
\end{proof}

%\textcolor{red}{Throughout this article we use $z$ and $w$ to denote, respectively, the first and the second variables of a couple $(., .)$.}

\medskip

\section{The particular case $f(x, u)=f(x)$}\label{sec:f(x)}
In this section we address  the problem
\begin{equation}\label{NP}%\tag{$P_{f(x)}$}
\left \{ \begin{array}{ll}
\Delta^{2}u-\text{div}\left(m\left (u, |\nabla u|_{2}^{2}\right)\nabla u\right)= f(x) & \mbox{in $\Omega$,}\\
u=\Delta u=0 & \mbox{on $\partial\Omega$,}
\end{array}\right.
\end{equation}
with $f\not\equiv0.$
%where $\Omega\subset\R^{N}$ is a bounded smooth domain, $f\in L^{q}(\Omega), q>N/2$ and  $N\geq 2$.
%
%\medskip
%We prove the following
%
%\textcolor{red}{TOGLIERE}
%\begin{theorem}
%If \eqref{m_{0}}-\eqref{m_{2}} hold, $0\not\equiv f\in L^{q}(\Omega)$ and $q>N/2$,  
%then problem \eqref{NP} has a nontrivial weak solution $u_{*}$.
%\end{theorem}
%
In order to prove Theorem \ref{th:main1}, let us consider for every $r\geq0$ the auxiliary problem
\begin{equation}\label{Pr}\tag{$P_{r}$}
\left \{ \begin{array}{ll}
\Delta^{2}u-\text{div}\left(m_{r}(u)\nabla u\right)= f(x) & \mbox{in $\Omega$,}\\
u=\Delta u=0 & \mbox{on $\partial\Omega$,}
\end{array}\right.
\end{equation}
 whose weak solution is, by definition a function $u_{r}$ such that
\begin{equation}\label{eq:Pr1}
u_{r}\in H^{2}(\Omega)\cap H_{0}^{1}(\Omega)\cap L^{\infty}(\Omega)
\end{equation} 
and
\begin{equation}\label{eq:Pr2}
\int_{\Omega}\Delta u_{r}\Delta \varphi dx+\int_{\Omega}m_{r}(u_{r})\nabla u_{r}\nabla \varphi dx=\int_{\Omega}f(x)\varphi dx, \ \forall 
\varphi\in H^{2}(\Omega)\cap H_{0}^{1}(\Omega).
\end{equation}
By our assumptions the equality above makes sense. 

\medskip

Let us consider the following system (recall that $M_{r}(t) = \int_{0}^{t} m_{r}(s)ds$):
\begin{equation}\label{star}\tag{**}
\left \{ \begin{array}{ll}
-\Delta( M_{r}(u)-v)= f(x) & \mbox{in $\Omega$,}\\
-\Delta u=-v & \mbox{in $\Omega$,}\\
u=v=0 & \mbox{on $\partial\Omega$,}
\end{array}\right.
\end{equation}
whose solution is by definition a pair $(u_{r}, v_{r})$ such that
\begin{equation}\label{star1}
u_{r}\in H^{1}_{0}(\Omega)\cap L^{\infty}(\Omega),\quad  M_{r}(u_{r}), v_{r}\in H^{1}_{0}(\Omega)
\end{equation}
and
\begin{equation}\label{star2}
\int_{\Omega} \nabla (M_{r}(u_{r})-v_{r}) \nabla \xi dx=\int_{\Omega} f(x)\xi dx,\quad 
\int_{\Omega}\nabla u_{r}\nabla \xi dx =- \int_{\Omega}v_{r}\xi dx, \quad \forall \xi\in H^{1}_{0}(\Omega).
\end{equation}

The equivalence between \eqref{Pr} and system \eqref{star} is given in the next

\begin{proposition}
If $u_{r}$ is a weak solution of \eqref{Pr} which is also in $W^{3,2}(\Omega)$, then the pair
$(u_{r}, \Delta u_{r})$ is a weak solution of \eqref{star}.

If a pair $(u_{r}, v_{r})$ solves \eqref{star} and $u_{r}\in W^{3,2}(\Omega)$, then necessarily
$v_{r}=\Delta u_{r}$
and $u_{r}$ is a solution of \eqref{Pr}.
\end{proposition}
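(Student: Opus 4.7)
\emph{Approach.} The algebraic identity underlying the equivalence is
$$\text{div}\bigl(m_r(u)\nabla u\bigr)=\Delta M_r(u),$$
so that the fourth--order equation in \eqref{Pr} rewrites as $-\Delta(M_r(u)-\Delta u)=f$; setting $v=\Delta u$ one reads off \eqref{star}. The proof is then a careful bookkeeping of regularities, the assumption $u_r\in W^{3,2}(\Omega)$ serving precisely to validate the integrations by parts that link the two formulations.

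\emph{From \eqref{Pr} to \eqref{star}.} Given $u_r$ as in \eqref{eq:Pr1}--\eqref{eq:Pr2} with $u_r\in W^{3,2}(\Omega)$, set $v_r:=\Delta u_r\in H^1(\Omega)$. Testing \eqref{eq:Pr2} first against $\varphi\in C_c^\infty(\Omega)$ and integrating by parts twice yields the pointwise equation $\Delta^2 u_r-\text{div}(m_r(u_r)\nabla u_r)=f$ a.e.\ in $\Omega$; testing again with a general $\varphi\in H^2\cap H^1_0$ and subtracting leaves the boundary integral $\int_{\partial\Omega}\Delta u_r\,\partial_n\varphi\,dS$, which must vanish for all such $\varphi$, forcing $\Delta u_r=0$ on $\partial\Omega$ and hence $v_r\in H^1_0(\Omega)$. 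Moreover $M_r(u_r)\in H^1_0(\Omega)$: the chain rule $\nabla M_r(u_r)=m_r(u_r)\nabla u_r$ lies in $L^2$ because $u_r\in H^1_0\cap L^\infty$ and $m_r$ is continuous, while $M_r(0)=0$ together with $u_r|_{\partial\Omega}=0$ supplies the boundary trace. The second identity in \eqref{star2} is now just the distributional characterisation $v_r=\Delta u_r$, obtained by integrating $\int_\Omega\nabla u_r\cdot\nabla\xi\,dx$ by parts against $\xi\in H^1_0$. For the first identity, plug $\varphi\in H^2\cap H^1_0$ into \eqref{eq:Pr2}, apply the chain rule to the second term, and use
$$\int_\Omega \Delta u_r\,\Delta\varphi\,dx=\int_\Omega v_r\,\Delta\varphi\,dx=-\int_\Omega\nabla v_r\cdot\nabla\varphi\,dx$$
(valid because $v_r\in H^1_0$); rearranging and extending by density of $H^2\cap H^1_0$ in $H^1_0$ yields exactly the first equation of \eqref{star2}.

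\emph{From \eqref{star} to \eqref{Pr}.} If $(u_r,v_r)$ satisfies \eqref{star1}--\eqref{star2} with $u_r\in W^{3,2}(\Omega)$, the second identity of \eqref{star2} combined with $\int_\Omega\nabla u_r\cdot\nabla\xi\,dx=-\int_\Omega\Delta u_r\,\xi\,dx$ (valid since $u_r\in H^2$ and $\xi\in H^1_0$) forces $v_r=\Delta u_r$ a.e., and the hypothesis $v_r\in H^1_0$ automatically delivers the Navier condition $\Delta u_r=0$ on $\partial\Omega$. To recover \eqref{eq:Pr2}, test the first equation of \eqref{star2} against $\varphi\in H^2\cap H^1_0\subset H^1_0$, undo the chain rule on the $M_r(u_r)$ term, and observe that $\int_\Omega\nabla v_r\cdot\nabla\varphi\,dx=-\int_\Omega v_r\,\Delta\varphi\,dx=-\int_\Omega\Delta u_r\,\Delta\varphi\,dx$ (again because $v_r\in H^1_0$ and $\varphi\in H^2\cap H^1_0$); the resulting identity is exactly \eqref{eq:Pr2}. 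The only real obstacle throughout is the bookkeeping of the boundary condition $\Delta u=0$: one must invoke $u_r\in W^{3,2}$ in both directions to guarantee that $v=\Delta u$ actually lies in $H^1_0(\Omega)$, so that the integrations by parts connecting the two formulations produce no boundary contributions.
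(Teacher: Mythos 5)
Your proof is correct and follows essentially the same route as the paper's: identify $v_r=\Delta u_r$, use the chain-rule identity $\Delta M_r(u_r)=\operatorname{div}(m_r(u_r)\nabla u_r)$, and pass between the two weak formulations by integration by parts. In fact you supply two details the paper leaves implicit --- the recovery of the Navier trace condition $\Delta u_r=0$ on $\partial\Omega$ (hence $\Delta u_r\in H^1_0(\Omega)$) from the weak formulation via the boundary term $\int_{\partial\Omega}\Delta u_r\,\partial_n\varphi\,dS$, and the density argument extending the test class from $H^2(\Omega)\cap H^1_0(\Omega)$ to $H^1_0(\Omega)$ --- the only cosmetic caveat being that the interior equation is obtained in the distributional rather than the a.e.\ sense, which is all the argument needs.
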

\begin{proof}
If $u_{r}$ is a weak solution of \eqref{Pr} which is also in $W^{3,2}(\Omega)$, then $v_{r}=\Delta u_{r}\in H^{1}_{0}
(\Omega)$
and the second identity in \eqref{star2} is trivially satisfied. On the other hand, since
$\Delta (M_{r}(u_{r}))=\text{div}(m_{r}(u_{r})\nabla u_{r})$ in a weak sense, 
the first identity in \eqref{star2} is satisfied (with $v_{r}=\Delta u_{r}$) since it is just a consequence of \eqref{eq:Pr2}.
It remains to show that $M_{r}\in H^{1}_{0}(\Omega)$. However, since $M_{r}$ is continuous, $u_{r}\in L^{\infty}(\Omega)$ and $\Omega$ is bounded, it is trivially $M_{r}(u_{r})\in L^{2}(\Omega)$. Moreover,
$$\int_{\Omega} |\nabla M_{r}(u_{r})|^{2}dx =\int_{\Omega} |m_{r}(u_{r})\nabla u_{r}|^{2}dx<\infty$$
showing that $M_{r}(u_{r})\in H^{1}_{0}(\Omega)$ and then \eqref{star1}.

On the contrary assume that a pair $(u_{r},v_{r})$ is a solution of \eqref{star},
and $u_{r}\in W^{3,2}(\Omega)$. In particular $u_{r}\in H^{2}(\Omega)$, which proves \eqref{eq:Pr1}.
Moreover (by definition of solution) $v_{r}\in H^{1}_{0}(\Omega)$
and from the second equation in \eqref{star2} we get $\Delta u_{r}=v_{r}\in H^{1}_{0}(\Omega)$.
Since again
$\Delta (M_{r}(u_{r}))=\text{div}(m_{r}(u_{r})\nabla u_{r})$ in a weak sense, 
we have that \eqref{eq:Pr2} is satisfied since is a consequence
of the first identity in \eqref{star1}.
\end{proof}

Moreover with a further change of variable 
\begin{equation}\label{eq:change}
z:=u, \qquad w:=M_{r}(u) - v,
\end{equation}
 system \eqref{star} can be written as
\begin{equation}\label{System}\tag{$S_{r}$}
\left \{ \begin{array}{ll}
-\Delta w= f(x) & \mbox{in $\Omega$,}\\
-\Delta z+M_{r}(z)= w & \mbox{in $\Omega$,}\\
z=w=0 & \mbox{on $\partial\Omega$.}
\end{array}\right.
\end{equation}
%\begin{equation}\label{star}\tag{*}
%\left \{
%\begin{array}{ll}
%-\Delta( M_{r}(u)-v)= f(x) & \mbox{in $\Omega$,} \\
%-\Delta u=-v & \mbox{in $\Omega$,} \\
%u=v=0 & \mbox{on $\partial\Omega$,}
%\end{array}
%\right.
%\end{equation}
%which by definition means that $u_{r}\in H^{1}_{0}(\Omega)\cap L^{\infty}(\Omega)$ with
%$M_{r}(u_{r})\in H^{1}_{0}(\Omega), v_{r}\in H^{1}_{0}(\Omega)$
%and   for all $\xi\in H^{1}_{0}(\Omega)$ we have
%\begin{eqnarray*}
%\int_{\Omega} \nabla (M_{r}(u_{r}) - v_{r})\nabla \xi  dx& =&\int_{\Omega}f(x)\xi dx \\
%\int_{\Omega} \nabla u_{r} \nabla \xi dx &=& \int_{\Omega} v_{r}\xi dx.
%\end{eqnarray*}
%On the other hand, if we have a pair $(u_{r}, v_{r})$ solution of \eqref{starstar}, this means that
%$u_{r}\in H^{1}_{0}(\Omega)\cap L^{\infty}(\Omega)$
Clearly $(u_{r},v_{r})\in \left(H^{1}_{0}\cap W^{3,2}(\Omega) \cap L^{\infty}(\Omega)\right)\times H^{1}_{0}(\Omega)$ is a solution of \eqref{star}, in the sense that the identities in \eqref{star2} hold,
 if and only if the pair $(z_{r}, w_{r}):=(u_{r},M_{r}(u_{r})- v_{r})$ solves \eqref{System}
 with additionally $u_{r}\in W^{3,2}(\Omega)$. Observe now that actually $w_{r}$ does not depend on $r$.

\medskip

In order to prove the existence of solution to problem \eqref{System}, and then \eqref{Pr},
let us recall first the following result concerning the general Dirichlet problem 
\begin{equation*}
\left \{ \begin{array}{ll}
-\Delta u+g(u)= \mu(x) & \mbox{in $\Omega$,}\\
u=0 & \mbox{on $\partial\Omega$}
\end{array}\right.
\end{equation*}
whose associated energy  functional we denote with $E$.
\begin{lemma}(See \cite[Lemma 2.4]{P1})\label{le:Ponce}
Let $g :\R \to \R $ be a continuous function satisfying the sign condition  $g(t)t\geq0$
and let 
$k \in\R$. Given $\mu\in L^{\infty}(\Omega)$ let $u \in H_{0}^{1}(\Omega)$ be a minimizer of
the functional $E$ on $H^{1}_{0}(\Omega)$.
\begin{itemize}
\item[i)] If for every $t\geq k$, $g(t)\geq|\mu|_{\infty}$ then $u\leq k$ in $\Omega$.
\item[ii)] If for every $t\leq k$, $g(t)\leq - |\mu|_{\infty}$ then $u\geq k$ in $\Omega$.
\end{itemize}
\end{lemma}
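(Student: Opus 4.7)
The plan is to exploit the minimization property of $u$ by substituting a truncation competitor into $E$. I focus on part (i); part (ii) is symmetric and follows either by redoing the argument with reversed signs, or by applying (i) to $-u$ with $\tilde g(t):=-g(-t)$ and $\tilde\mu:=-\mu$.

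For part (i), I set $w:=(u-k)^{+}=\max(u-k,0)$ and take as competitor $v:=u-w=\min(u,k)$. The admissibility of $v$ in $H^{1}_{0}(\Omega)$ is the first point to check: since $u$ has vanishing trace, $w$ has trace $(-k)^{+}$, which is zero precisely when $k\geq 0$. This is the meaningful range of the lemma, since an $H^{1}_{0}$ function cannot satisfy $u\leq k$ a.e.\ in $\Omega$ for a negative $k$. Using $w\geq 0$ and $\nabla w=\chi_{\{u>k\}}\nabla u$ a.e., a direct expansion of $|\nabla v|^{2}=|\nabla u-\nabla w|^{2}$ yields
$$
\tfrac{1}{2}\int_{\Omega}|\nabla u|^{2}\,dx-\tfrac{1}{2}\int_{\Omega}|\nabla v|^{2}\,dx=\tfrac{1}{2}\int_{\Omega}|\nabla w|^{2}\,dx.
$$
For the potential term, writing $G(t)=\int_{0}^{t}g(s)\,ds$, the integrand $G(u)-G(v)$ vanishes on $\{u\leq k\}$; on $\{u>k\}$ the hypothesis $g(s)\geq|\mu|_{\infty}$ for $s\geq k$ gives $G(u)-G(k)\geq|\mu|_{\infty}(u-k)=|\mu|_{\infty}w$. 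Finally $\int_{\Omega}\mu(u-v)\,dx=\int_{\Omega}\mu w\,dx\leq\int_{\Omega}|\mu|_{\infty}w\,dx$. Assembling the three pieces,
$$
E(u)-E(v)\geq\tfrac{1}{2}\int_{\Omega}|\nabla w|^{2}\,dx+\int_{\Omega}(|\mu|_{\infty}-\mu)w\,dx\geq\tfrac{1}{2}\int_{\Omega}|\nabla w|^{2}\,dx\geq 0.
$$
Since $u$ minimizes $E$, the left-hand side is nonpositive, forcing $\nabla w\equiv 0$; as $w\in H^{1}_{0}(\Omega)$, Poincar\'e's inequality yields $w\equiv 0$, i.e.\ $u\leq k$ a.e.\ in $\Omega$.

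The sign condition $g(t)t\geq 0$ does not enter the truncation estimate itself; it is needed only upstream to guarantee that $E$ is bounded below on $H^{1}_{0}(\Omega)$ and therefore admits a minimizer in the first place. I expect the only subtle point to be the admissibility check for the competitor $v$: one must track boundary traces to see that $v\in H^{1}_{0}(\Omega)$, which in turn constrains the sign of $k$. Once this is in place the argument is pure bookkeeping across the three pieces of the functional.
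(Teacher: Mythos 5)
Your truncation argument is correct. The paper itself offers no proof of this lemma --- it is quoted verbatim from Ponce \cite[Lemma 2.4]{P1} and used as a black box --- so what you have done is reconstruct the cited result, and your reconstruction is the standard comparison-by-truncation proof: compete $u$ against $v=\min(u,k)$, observe that the Dirichlet term drops by exactly $\tfrac12\int_\Omega|\nabla (u-k)^+|^2\,dx$, that the potential term drops by at least $|\mu|_\infty\int_\Omega (u-k)^+\,dx$ thanks to $g\geq|\mu|_\infty$ on $[k,\infty)$, and that the linear term can increase by at most that same amount; minimality then forces $(u-k)^+\equiv 0$. Two small remarks. First, your observation that admissibility of $v$ in $H^1_0(\Omega)$ requires $k\geq 0$ (and $k\leq 0$ for part (ii)) is a genuine constraint that the statement leaves implicit, but it is harmless here: in the paper the lemma is applied with $k=\tau_2>0$ for (i) and $k=\tau_1<0$ for (ii). Second, the sign condition $g(t)t\geq 0$ does slightly more than guarantee existence of a minimizer: it makes $G\geq 0$ with $G$ nondecreasing on $[0,\infty)$, which is what ensures $G(v)\leq G(u)$ pointwise and hence that $E(v)$ is finite whenever $E(u)$ is --- a point worth noting since $E$ is only an extended-real-valued functional on $H^1_0(\Omega)$ (the paper itself writes $I_r:H^1_0(\Omega)\to(\infty,\infty]$ for the analogous object). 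Neither point affects the validity of your argument.
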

With this result in hands we can prove the following.
\begin{proposition}\label{exis}
If \eqref{m_{0}}-\eqref{m_{1}} hold and $0\not\equiv f\in L^{q}(\Omega), q>N/2$, 
then for each $r\geq 0$, the auxiliary problem \eqref{System} has a unique nontrivial weak solution
 $(z_{r}, w)$.
Even more, it is $z_{r}\in H_{0}^{1}(\Omega)\cap W^{3,2}(\Omega)\cap C^{1, \alpha}(\overline{\Omega})$, for some $\alpha\in (0, 1)$.
\end{proposition}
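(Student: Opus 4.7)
The plan is to exploit the decoupled structure of the system $(S_r)$: the first equation for $w$ involves neither $z$ nor $r$, so it can be solved once and for all, and then the second equation becomes a scalar semilinear Dirichlet problem for $z_r$ with data $w$.

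\textbf{Step 1 (solve for $w$).} Since $f\in L^{q}(\Omega)$ with $q>N/2$ and $\partial\Omega$ is smooth, Calder\'on--Zygmund theory gives a unique $w\in W^{2,q}(\Omega)\cap W_{0}^{1,q}(\Omega)$ solving $-\Delta w=f$ with $w=0$ on $\partial\Omega$. The Sobolev embedding $W^{2,q}(\Omega)\hookrightarrow C(\overline\Omega)$, valid because $q>N/2$, places $w$ in $L^{\infty}(\Omega)$; a short computation (using $q>N/2\geq 2N/(N+2)$ for $N\geq 2$) also yields $w\in H_{0}^{1}(\Omega)$. The fact that $f\not\equiv 0$ forces $w\not\equiv 0$, which already implies that the pair $(z_{r},w)$ is non--trivial, whatever $z_{r}$ turns out to be.

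\textbf{Step 2 (solve for $z_r$ variationally).} Consider the functional
$$E_{r}(z):=\tfrac{1}{2}\int_{\Omega}|\nabla z|^{2}\,dx+\int_{\Omega}\Phi_{r}(z)\,dx-\int_{\Omega}wz\,dx,\qquad \Phi_{r}(t):=\int_{0}^{t}M_{r}(s)\,ds,$$
on $H_{0}^{1}(\Omega)$. By Lemma \ref{le}(a), $M_{r}$ is continuous and strictly increasing with $M_{r}(0)=0$, so $\Phi_{r}$ is nonnegative, continuous and strictly convex. To avoid any growth issue for $\Phi_{r}$ at infinity I would first replace $M_{r}$ by a truncation outside $[-K,K]$ (with $K$ to be chosen large in Step 3), solve the resulting uniquely solvable, strictly convex, coercive minimization, and then remove the truncation a posteriori. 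The Euler--Lagrange equation of $E_{r}$ is exactly $-\Delta z_{r}+M_{r}(z_{r})=w$ in the weak sense; strict convexity (or, equivalently, strict monotonicity of $z\mapsto -\Delta z+M_{r}(z)$) yields uniqueness of the weak solution $z_{r}\in H_{0}^{1}(\Omega)$.

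\textbf{Step 3 ($L^\infty$ bound and bootstrap).} Apply Lemma \ref{le:Ponce} with $g=M_{r}$ and $\mu=w$. Using $|M_{r}(t)|\geq \mathfrak m|t|$ from Lemma \ref{le}(a), the choice $k=|w|_{\infty}/\mathfrak m$ gives $M_{r}(k)\geq|w|_{\infty}$ and $M_{r}(-k)\leq -|w|_{\infty}$; hence $|z_{r}|_{\infty}\leq|w|_{\infty}/\mathfrak m$. Choosing the truncation threshold $K$ larger than this bound makes the truncation inactive, so $z_{r}$ solves the original equation. Once $z_{r}\in L^{\infty}(\Omega)$, continuity of $M_{r}$ yields $M_{r}(z_{r})\in L^{\infty}(\Omega)$; hence $-\Delta z_{r}=w-M_{r}(z_{r})\in L^{\infty}(\Omega)$, so elliptic regularity puts $z_{r}$ in $W^{2,p}(\Omega)$ for every $p<\infty$ and therefore in $C^{1,\alpha}(\overline\Omega)$. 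For $W^{3,2}$, note that $w\in H^{1}(\Omega)$ (Step 1) and $M_{r}(z_{r})\in H^{1}(\Omega)$ by the chain rule (since $m_{r}$ is continuous, bounded on the range of $z_{r}$, and $z_{r}\in H^{1}\cap L^{\infty}$); then $-\Delta z_{r}\in H^{1}(\Omega)$ and the standard $H^{3}$ regularity gives $z_{r}\in W^{3,2}(\Omega)$.

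\textbf{Main obstacle.} The only delicate point is Step 2: because no growth hypothesis on $m_{r}$ at infinity is assumed, the functional $E_{r}$ need not be finite on all of $H_{0}^{1}(\Omega)$, and one cannot blindly apply the direct method. The truncation trick (or equivalently a Browder--Minty argument applied to the maximal monotone operator $z\mapsto-\Delta z+M_{r}(z)$) is what repairs this, and Lemma \ref{le:Ponce} is precisely the tool that closes the loop by proving a priori that a bounded truncation is enough. Every other step -- existence of $w$, strict convexity, Lemma \ref{le}(a), bootstrap regularity, non--triviality -- is a routine consequence of the preceding ones.
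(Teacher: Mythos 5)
Your proposal is correct and follows essentially the same route as the paper: solve the decoupled first equation for $w$, truncate $M_{r}$ so that the strictly convex, coercive functional is well defined, minimize, and use Lemma \ref{le:Ponce} together with $|M_{r}(t)|>\mathfrak m|t|$ to get the a priori $L^{\infty}$ bound that makes the truncation inactive, then bootstrap regularity exactly as in the text. The only (harmless) deviation is in the uniqueness step, where you test the difference of two bounded weak solutions against the monotone operator directly, whereas the paper re-truncates on a larger interval containing both ranges and invokes strict convexity; both arguments are valid.
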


\begin{proof}
%By the previous considerations
% it is sufficient to prove that, for any $r\geq 0$, system \eqref{System} has a unique solution 
%$(u_{r}, w_{r})\in \left(H_{0}^{1}(\Omega)\cap W^{3,2}(\Omega)\cap L^{\infty}(\Omega)\right)\times H^{1}_{0}(\Omega)$
%and that it is also $u_{r}\in C^{1,\alpha}(\overline \Omega)$.
%
Let $r\geq0$ be fixed and let us proceed in two steps.
\medskip

{\bf Step 1: Existence}

\medskip

It is clear that  the
first equation in \eqref{System} has a unique nontrivial weak solution $w\in H^{1}_{0}(\Omega)\cap 
C(\overline{\Omega})$, being  $q>N/2$, which obviously does not depend on $r$.
% On the other hand, from \eqref{m_{0}} we conclude that $M_{r}$ is increasing. 
%\textcolor{red}{TOGLIERE: Since $M_{r}(0)=0$, it follows that $M_{r}$ satisfies the sign condition
%\begin{equation}\label{sign}
%M_{r}(t)t\geq 0, \ \forall t\in \R.
%\end{equation}}

By the sign condition of $M_{r}$ in Lemma \ref{le} (a) we conclude the existence of numbers $\tau_{1}<0<\tau_{2}$ such that $M_{r}(\tau_{1})=-|w|_{\infty}$ and $M_{r}(\tau_{2})=|w|_{\infty}$. %Since $M_{r}$ is increasing 
Moreover we can invoke Lemma \ref{le:Ponce} 
 ensuring that if $z_{r}\in H_{0}^{1}(\Omega)$ is a minimum of the functional $I_{r}:H_{0}^{1}(\Omega)\to(\infty, \infty]$ associated to the second equation in \eqref{System}, given explicitly by
$$
I_{r}(z)=\frac{1}{2}\int_{\Omega}|\nabla z|^{2} dx+\int_{\Omega}\widehat{M}_{r}(z) dx-\int_{\Omega}wz dx,
$$ 
then $\tau_{1}\leq z_{r}\leq \tau_{2}$ and, therefore, $z_{r}\in L^{\infty}(\Omega)$. Above, $\widehat{M_{r}}$ denotes the primitive of $M_{r}$ with regard to $t$, such that $\widehat{ M_{r}}(0)=0$.

We are then reduced to find a minimum of $I_{r}$ which it turns out to be unique.
 The problem with this functional is  
that the integral $\int_{\Omega}\widehat{M_{r}}(z) dx$, $z\in H^{1}_{0}(\Omega)$ can be infinite, then
our strategy is to  truncate the functional  $I_{r}$ in such a way that now,
 by standard methods, there is a unique critical point  $z_{r}$ of the truncated functional
 which is also the unique critical point
of $I_{r}$ and it belongs to $H^{1}_{0}(\Omega)\cap W^{3,2}(\Omega)\cap L^{\infty}(\Omega)$.
Then we will prove that $z_{r}\in C^{1,\alpha}(\overline\Omega)$.
% to prove the existence of weak solutions we do not differentiate directly the functional $I_{r}$; indeed
%   we are going to define a truncated functional $I_{r,\tau}\in C^{1}(H^{1}_{0}(\Omega), \R)$ whose critical point is the unique weak solution of the second equation in \eqref{System}.

So let us consider the truncated map
\begin{equation*}\label{trunc}
M_{r, [\tau_{1},\tau_{2}]}(t)=\left \{ \begin{array}{ll}
M_{r}(\tau_{1}) & \mbox{if $t\leq \tau_{1}$,}\\
M_{r}(t) & \mbox{if $\tau_{1}< t< \tau_{2}$,}\\
M_{r}(\tau_{2}) & \mbox{if $\tau_{2}\leq t$}
\end{array}\right.
\end{equation*}
with $\tau_{1},\tau_{2}$ given above, and the new functional 
$$
I_{r, [\tau_{1}, \tau_{2}]}(z)=\frac{1}{2}\int_{\Omega}|\nabla z|^{2} dx+\int_{\Omega}\widehat{M}_{r, [\tau_{1},\tau_{2}]}(z) dx-\int_{\Omega}wz dx,
$$
which is  well defined (because $M_{r, [\tau_{1},\tau_{2}]}$ is a bounded function) and belongs to $C^{1}(H^{1}_{0}(\Omega), \R)$. 
Again  $\widehat{M}_{r, [\tau_{1},\tau_{2}]}(t)=\int_{0}^{t}M_{r, [\tau_{1},\tau_{2}]}(s) ds$. 

%\textcolor{red}{Our aim is to show nw that the critical points of $I_{r, \tau}$ are weak solutions of the second equation in  \eqref{System}.}
%
%\textcolor{red}{Since $M_{r, [\tau_{1},\tau_{2}]}$ is continuous and verifies the sign condition in Lemma \ref{le} (a), we can ensure 
%by Lemma \ref{le:Ponce}
%the existence of a nontrivial global minimum  $u_{r}$ of $I_{r, [\tau_{1},\tau_{2}]}(z)$.??}
Note that   the functional 
$I_{r, [\tau_{1},\tau_{2}]}$ is  coercive. Moreover
by \eqref{m_{1}} the function  $\widehat{M}_{r, [\tau_{1},\tau_{2}]}$ is convex, and then the functional 
$I_{r, [\tau_{1},\tau_{2}]}$ is also strictly convex and   weakly lower semicontinuous. Then
 it possesses  a unique  critical point, which is minimum, $z_{r}\in H^{1}_{0}(\Omega)$.
Thus,
$$
\int_{\Omega}\nabla u_{r}\nabla \varphi dx+\int_{\Omega}\widehat M_{r,[\tau_{1},\tau_{2}]}(u_{r})\varphi dx=\int_{\Omega} w\varphi dx, \ \forall  \varphi\in H^{1}_{0}(\Omega).
$$ 
As $M_{r,  [\tau_{1},\tau_{2}]}$ is nondecreasing and satisfies the conditions of Lemma \ref{le:Ponce}, 
it holds $\tau_{1}\leq z_{r}\leq \tau_{2}$. Therefore $z_{r}\in H_{0}^{1}(\Omega)\cap L^{\infty}(\Omega)$ is a nontrivial minimum point of the original functional $I_{r}$.
%, and then a weak solution of the second equation in  \eqref{System}.

%Note that since by \eqref{m_{0}} the function  $\widehat{M}_{r, \tau}$ is convex, the functional $I_{r, \tau}$ is strictly convex and, consequently, $u_{r}$ is its unique critical point.

Now we prove that it is also $z_{r}\in  W^{3,2}(\Omega)\cap C^{1, \alpha}(\overline{\Omega})$, for some $\alpha\in (0, 1)$.
In fact, by defining the  function 
$$
g(x)=\begin{cases}
M_{r}(z_{r}(x))/z_{r}(x) &\text{ if } x\in \{x\in \Omega: z_{r}(x)\neq 0\}\\
0 &\text{ otherwise}
\end{cases}
$$
we have
$g\in L^{\infty}(\Omega)$, $g\geq 0$.
%by $g(x)=M_{r}(u_{r})/u_{r}$ in $\{x\in \Omega: u(x)\neq 0\}$ and $g(x)=0$ in $\{x\in \Omega: u(x)=0\}$, 
So $z_{r}$ is a weak solution of the problem
\begin{equation*}\label{problem1}
\left \{ \begin{array}{ll}
-\Delta z+ g(x)z=w & \mbox{in $\Omega$,}\\
z=0 & \mbox{on $\partial\Omega$}
\end{array}\right.
\end{equation*}
and recalling that $w\in C(\overline{\Omega})$, it follows from \cite[Theorem 9.15]{GT} that $z_{r}\in W_{0}^{2, p}(\Omega)$ for all $p\in (1, \infty)$. By the Sobolev embedding we get $z_{r}\in C^{1, \alpha}(\overline{\Omega})$ for some $\alpha\in(0, 1)$. 
Finally, being $w\in H_{0}^{1}(\Omega)$ and $z_{r}\in H_{0}^{1}(\Omega)\cap L^{\infty}(\Omega)$ we get $w-M_{r}(z_{r})\in H_{0}^{1}(\Omega)$: therefore from 
\begin{equation*}\label{problem2}
\left \{ \begin{array}{ll}
-\Delta z_{r}=w-M_{r}(z_{r}) & \mbox{in $\Omega$,}\\
z_{r}=0 & \mbox{on $\partial\Omega$,}
\end{array}\right.
\end{equation*}
we conclude that $z_{r}\in  W^{3,2}(\Omega)$,
see \cite{ADN}.

\medskip

{\bf Step 2: Unicity}

\medskip

We show that if $\overline z_{r}\in H_{0}^{1}(\Omega)\cap L^{\infty}(\Omega)$ is another weak solution of 
the second equation in \eqref{System} then $\overline z_{r}=z_{r}$. In fact, suppose $|\overline z_{r}|_{\infty}<c$. If $c\leq \min\{-\tau_{1}, 
\tau_{2}\}$ there is nothing to do. On the other hand, if $c>\min\{-\tau_{1}, \tau_{2}\}$, 
we set $\widehat{c}=\max\{c, \max\{-\tau_{1}, \tau_{2}\}\}$ and then there
 exists numbers $\tau_{1}'<0<\tau_{2}'$ such that $M_{r}(\tau_{1}') = -\widehat c, M_{r}(\tau_{2}') = \widehat c $.
 As before we define the truncated map
 \begin{equation*}\label{trunc}
M_{r, [\tau'_{1},\tau'_{2}]}(t)=\left \{ \begin{array}{ll}
M_{r}(\tau'_{1}) & \mbox{if $t\leq \tau'_{1}$,}\\
M_{r}(t) & \mbox{if $\tau'_{1}< t< \tau'_{2}$,}\\
M_{r}(\tau'_{2}) & \mbox{if $\tau'_{2}\leq t$}
\end{array}\right.
\end{equation*}
and the corresponding functional $I_{r,[\tau'_{1},\tau'_{2}]}$.
%
%and we
%we define the truncated map 
%$M_{r, \overline{c}}$, as in \eqref{trunc}, where $\overline{c}=\max\{c, \max\{-\tau_{1}, \tau_{2}\}\}$, with $\|w\|
%_{C(\overline{\Omega})}, \tau_{1}$ and $\tau_{2}$ replaced by $\overline{c}, \tau_{1}'$ and $\tau_{2}'$, respectively, 
%i.e., $M_{r}(\tau_{1}')=-\overline{c}$ and $M_{r}(\tau_{2}')=\overline{c}$. Let $I_{r, \overline{c}}$ be the correspondent 
%functional. 
Using the previous arguments, we conclude that $I_{r, [\tau'_{1},\tau'_{2}]}$ has a unique critical point which is a global 
minimum. Since $\overline z_{r}$ and $z_{r}$ are critical points of the strictly convex functional 
$I_{r, [\tau'_{1},\tau'_{2}]}$, the result follows.
\end{proof}

\begin{remark}\label{rem1}
It follows by the previous proof that 
\begin{equation}\label{ineq1}
|M_{r}(z_{r})|_{\infty}\leq |w|_{\infty}, \ \forall\,  r\geq 0
\end{equation}
and so,  by Lemma \ref{le} (a), the uniform estimate in $r$:% and last inequality we get the following estimate to solution $u_{r}$:
\begin{equation*}\label{priori}
|z_{r}|_{\infty}< \frac{1}{\mathfrak{m}}|w|_{\infty}, \ \forall\,  r\geq 0.
\end{equation*}
This will be fundamental in the case of the general nonlinearity $f(x,u)$.
\end{remark}

As a consequence, recalling that actually $z_{r} = u_{r}$ (see the change of variable \eqref{eq:change})
we have 
 
\begin{proposition}\label{prop:ur}
If \eqref{m_{0}}-\eqref{m_{1}} hold and $0\not\equiv f\in L^{q}(\Omega), q>N/2$, 
 for each $r\geq 0$, problem \eqref{Pr} admits a unique nontrivial solution $u_{r}\in H_{0}^{1}(\Omega)\cap W^{3,2}(\Omega)\cap C^{1, \alpha}(\overline{\Omega})$, for some $\alpha\in (0, 1)$. 
\end{proposition}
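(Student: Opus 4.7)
The proposition is essentially a corollary of Proposition \ref{exis} combined with the two equivalence results established just before it. The plan is to transport the unique solution $(z_r, w)$ of the triangular system \eqref{System} back to the original fourth order problem \eqref{Pr} by inverting the chain of changes of variable.

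First I would invoke Proposition \ref{exis} to obtain, for the fixed $r \geq 0$, a unique nontrivial solution $(z_r, w)$ of \eqref{System} with $z_r \in H^1_0(\Omega)\cap W^{3,2}(\Omega)\cap C^{1,\alpha}(\overline\Omega)$ and $w \in H^1_0(\Omega)\cap C(\overline\Omega)$. Then I would set
$$
u_r := z_r, \qquad v_r := M_r(z_r) - w,
$$
which is exactly the inversion of the change of variable \eqref{eq:change}. A direct substitution shows that the second equation of \eqref{System} is equivalent to $-\Delta u_r = -v_r$, while the first equation reads $-\Delta(M_r(u_r) - v_r) = f(x)$; hence $(u_r, v_r)$ formally solves \eqref{star}.

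To make this rigorous I need to verify the regularity required by the definition \eqref{star1}. Since $u_r \in H^1_0(\Omega)\cap L^\infty(\Omega)$ (inherited from $z_r$), since $m_r$ is continuous by \eqref{m_{0}} and therefore bounded on the range of $u_r$, and since $M_r(0)=0$, we have $M_r(u_r) \in H^1_0(\Omega)$ with $\nabla M_r(u_r) = m_r(u_r)\nabla u_r \in L^2(\Omega)$; together with $w \in H^1_0(\Omega)$, this yields $v_r \in H^1_0(\Omega)$. Hence $(u_r, v_r)$ is a genuine weak solution of \eqref{star}. Because additionally $u_r \in W^{3,2}(\Omega)$, the equivalence proposition between \eqref{Pr} and \eqref{star} gives at once that $u_r$ is a weak solution of \eqref{Pr}, with the required regularity $u_r \in H^1_0(\Omega)\cap W^{3,2}(\Omega)\cap C^{1,\alpha}(\overline\Omega)$. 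Nontriviality of $u_r$ follows from the nontriviality of $z_r$ provided by Proposition \ref{exis}.

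For uniqueness, I would run the equivalences in the opposite direction: if $\widetilde u_r \in H^2(\Omega)\cap H^1_0(\Omega)\cap L^\infty(\Omega)$ is any other weak solution of \eqref{Pr}, then $\widetilde u_r \in W^{3,2}(\Omega)$ follows from the regularity argument used in the proof of Proposition \ref{exis} (viewing $\widetilde u_r$ as a solution of $-\Delta \widetilde u_r = w - M_r(\widetilde u_r)$ with an $H^1_0$ right-hand side, via \cite{ADN}), so the equivalence proposition applies and $(\widetilde u_r, \Delta \widetilde u_r)$ solves \eqref{star}. Applying the change of variable \eqref{eq:change} once more yields a solution $(\widetilde z_r, \widetilde w) = (\widetilde u_r, M_r(\widetilde u_r) - \Delta \widetilde u_r)$ of \eqref{System}; the uniqueness in Proposition \ref{exis} forces $\widetilde z_r = z_r$, and thus $\widetilde u_r = u_r$. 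The only mildly delicate point in the whole argument is the bookkeeping required to confirm that the regularity needed to apply the equivalence proposition is actually available on both sides; all of this is furnished by Proposition \ref{exis} and standard elliptic regularity, so no new analytical difficulty arises here.
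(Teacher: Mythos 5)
Your proposal is correct and follows essentially the same route as the paper, which states this proposition without any written proof, treating it as an immediate consequence of Proposition \ref{exis} together with the equivalence between \eqref{Pr}, \eqref{star} and \eqref{System} under the change of variable \eqref{eq:change}. The one point you gloss over is the identity $\Delta \widetilde u_r = M_r(\widetilde u_r) - w$ for an arbitrary competitor in the uniqueness step: for a general weak solution in $H^2(\Omega)\cap H^1_0(\Omega)\cap L^\infty(\Omega)$ this requires a short duality (transposition) argument before the $W^{3,2}(\Omega)$ regularity can be extracted, whereas the paper implicitly restricts uniqueness to the $W^{3,2}(\Omega)$ class where the equivalence proposition applies directly.
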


\begin{remark}\label{rem2}
We stress here  that if $u_{r}\in H_{0}^{1}(\Omega)\cap L^{\infty}(\Omega)$ is the solution of \eqref{Pr} then $w:=M_{r}(u_{r})-\Delta u_{r}$ does not depend on $r$, due to uniqueness of the solution to the first equation 
appearing in \eqref{System}.
% \textcolor{red}{por que? (see Proposition \ref{equiv})}. 
\end{remark}

The next step is given by the following proposition.

\begin{proposition}\label{prop}
If \eqref{m_{0}}-\eqref{m_{1}} hold and $u_{r}\in H_{0}^{1}(\Omega)\cap W^{3,2}(\Omega)\cap C^{1, \alpha}(\overline{\Omega})$ is the unique solution of \eqref{Pr} provided in Proposition \ref{prop:ur}, then the map
 $$S: r\in[0,+\infty)\longmapsto \int_{\Omega}|\nabla u_{r}|^{2}dx\in \mathbb [0,+\infty)$$ is continuous, bounded
 and has a fixed point $r_{*}$.
\end{proposition}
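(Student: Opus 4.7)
The plan is to verify the three assertions (boundedness, continuity, existence of a fixed point) in turn, in each case exploiting the fact that by Remark \ref{rem1} the datum $w$ and the bounds $|u_r|_\infty \leq |w|_\infty/\mathfrak m$, $|M_r(u_r)|_\infty \leq |w|_\infty$ are all independent of $r$.

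For \textbf{boundedness}, I would test the second equation of \eqref{System}, namely $-\Delta u_r + M_r(u_r) = w$, against $u_r$ itself:
$$\int_\Omega |\nabla u_r|^2\,dx + \int_\Omega M_r(u_r)\, u_r\,dx = \int_\Omega w\, u_r\,dx.$$
The sign condition in Lemma \ref{le}(a) makes the second left-hand term nonnegative, and the uniform $L^\infty$-bound of Remark \ref{rem1} combined with Cauchy--Schwarz gives $S(r) \leq |w|_2 |u_r|_2 \leq C(w,|\Omega|,\mathfrak m)$, independently of $r$.

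For \textbf{continuity}, let $r_n \to r_0 \geq 0$. From $-\Delta u_{r_n} = w - M_{r_n}(u_{r_n})$, the right-hand side is uniformly bounded in $L^\infty(\Omega)$ by $2|w|_\infty$. By \cite[Theorem 9.15]{GT} applied uniformly, $\{u_{r_n}\}$ is bounded in $W_0^{1,p}(\Omega)\cap W^{2,p}(\Omega)$ for every $p<\infty$, and so by Morrey's embedding bounded in $C^{1,\alpha}(\overline\Omega)$. By Arzel\`a--Ascoli, a subsequence converges in $C^1(\overline\Omega)$ to some $u\in H_0^1(\Omega)\cap L^\infty(\Omega)$. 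Using the continuity of $M$ granted by Lemma \ref{le}(b), one passes to the limit in the weak formulation and finds that $u$ solves $-\Delta u + M_{r_0}(u) = w$. The uniqueness established in Proposition \ref{exis} forces $u = u_{r_0}$, so the limit is independent of the subsequence and the entire sequence converges in $C^1(\overline\Omega)$. In particular, $S(r_n)=\int_\Omega |\nabla u_{r_n}|^2\,dx \to \int_\Omega |\nabla u_{r_0}|^2\,dx = S(r_0)$.

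For the \textbf{fixed point}, let $R := C(w,|\Omega|,\mathfrak m)$ be the uniform bound from Step 1. Then $S$ restricts to a continuous self-map of the compact interval $[0,R]$, so the one-dimensional Brouwer theorem (equivalently, the intermediate value theorem applied to $r \mapsto S(r)-r$, which is $\geq 0$ at $r=0$ and $\leq 0$ at $r=R$) provides a fixed point $r_*\in[0,R]$.

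The main obstacle is the continuity step: from boundedness one only gets weak $H_0^1$ convergence of a subsequence, which is insufficient to pass to the limit in $\int|\nabla u_{r_n}|^2\,dx$. The crucial ingredient that upgrades this to strong $C^1$ convergence is the $r$-uniform $L^\infty$ control on the nonlinear term $M_{r_n}(u_{r_n})$ coming from \eqref{ineq1}, which feeds into elliptic $L^p$-regularity and Morrey's embedding. Everything else then reduces to Lebesgue-type passages justified by Lemma \ref{le}(b) and the uniqueness already proved in Proposition \ref{exis}.
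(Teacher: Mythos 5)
Your argument is correct, and your boundedness and fixed--point steps essentially coincide with the paper's (the paper closes the bound via Poincar\'e, obtaining $S(r)\le \lambda_1^{-1}|w|_2^2$, rather than via the uniform $L^\infty$ bound on $u_r$; either works, and both then apply the intermediate value theorem using $S(0)>0$). Your continuity step, however, takes a genuinely different route. The paper stays at the $H^1$ level: it extracts a weakly convergent subsequence $u_{r_n}\rightharpoonup u_{\#}$, identifies $u_{\#}=u_{r_\infty}$ by passing to the limit in the weak formulation, and then upgrades to strong $H^1_0$ convergence by subtracting the two equations and testing with $u_{r_n}-u_{r_\infty}$ (the $w$-terms cancel because $w$ is $r$-independent), which yields
$$
\|u_{r_n}-u_{r_\infty}\|\le \tfrac{1}{\sqrt{\lambda_1}}\,\bigl|M_{r_n}(u_{r_n})-M_{r_\infty}(u_{r_\infty})\bigr|_2 ,
$$
the right-hand side tending to zero by dominated convergence thanks to \eqref{ineq1} and Lemma \ref{le}(b). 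You instead invoke elliptic $L^p$-regularity plus the compact embedding $W^{2,p}(\Omega)\hookrightarrow C^{1,\alpha}(\overline\Omega)\hookrightarrow C^{1}(\overline\Omega)$ to obtain strong $C^1$ compactness directly, and then identify the limit through the uniqueness of Proposition \ref{exis}. Your route uses heavier machinery --- note that the uniform $W^{2,p}$ bound comes from the a priori estimate (e.g.\ \cite[Lemma 9.17]{GT}) rather than from the existence statement of \cite[Theorem 9.15]{GT}, combined with the $r$-uniform bounds $|u_r|_\infty\le |w|_\infty/\mathfrak m$ and $|M_r(u_r)|_\infty\le |w|_\infty$ --- but it buys the stronger conclusion that $r\mapsto u_r$ is continuous into $C^1(\overline\Omega)$. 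The paper's energy argument is more elementary and self-contained, giving only $H^1_0$-continuity, which is all that $S$ requires. Both proofs ultimately rest on the same two ingredients you correctly isolate: the $r$-uniform estimate \eqref{ineq1} and the joint continuity of $M$ from Lemma \ref{le}(b).
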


\begin{proof}
Let $\{r_{n}\}$ be a sequence of nonnegative numbers such that $r_{n}\to r_{\infty}\geq0$.
Setting $$u_{n}:=u_{r_{n}}\in H_{0}^{1}(\Omega)\cap W^{3,2}(\Omega)\cap C^{1, \alpha}(\overline{\Omega}),\qquad 
M_{n}:=M_{r_{n}}, \quad n\in \mathbb N$$ and using the second equation in \eqref{System}, we have
$$
\int_{\Omega}|\nabla u_{n}|^{2}dx+\int_{\Omega}M_{n}(u_{n})u_{n} dx=\int_{\Omega}wu_{n} dx, \ \forall\, n\in\N.
$$
Hence, from the sign condition, the H\"older  and Poincar\'e inequalities we infer
\begin{equation}\label{end}
\int_{\Omega}|\nabla u_{n}|^{2}dx\leq \frac{1}{\sqrt{\lambda_{1}}}|w|_{2}\left( \int_{\Omega}|\nabla u_{n}|^{2}dx\right)^{1/2}, \ \forall \,n\in\N
\end{equation}
%Whence $\left( \int_{\Omega}|\nabla u_{n}|^{2}dx\right)^{1/2}\leq (1/\sqrt{\lambda_{1}})|w|_{2}$ and 
so that $\{u_{n}\}$ is bounded in $H_{0}^{1}(\Omega)$. Therefore there exists $u_{\#}\in H_{0}^{1}(\Omega)$ such that, passing to a subsequence, 
\begin{equation}\label{conv2}
u_{n}\rightharpoonup u_{\#} \ \mbox{in} \ H_{0}^{1}(\Omega), \  u_{n}\to u_{\#} \ \mbox{in} \ L^{s}(\Omega), \ 1\leq s\leq 2^{\ast}, \ u_{n}(x)\to u_{\#}(x) \ \mbox{a.e. in $\Omega$}
\end{equation}
%\begin{equation*}
%u_{n}\to u_{\ast} \ \mbox{in} \ L^{s}(\Omega), \ 1\leq s\leq 2^{\ast},
%\end{equation*}
%\begin{equation}\label{conv1}
%u_{n}(x)\to u_{\ast}(x) \ \mbox{a.e. in $\Omega$}
%\end{equation}
and
%\begin{equation}
$|u_{n}(x)|\leq h(x) \ \mbox{a.e. in $\Omega$},$
%\end{equation}
for some $h\in L^{2}(\Omega)$.

We will show now that $u_{\#}=u_{\infty}$, where $u_{\infty}:=u_{r_{\infty}}$. In fact, from the definition of $u_{n}$
$$
\int_{\Omega}\nabla u_{n}\nabla \varphi dx+\int_{\Omega}M_{n}(u_{n})\varphi dx=\int_{\Omega}w\varphi dx, \ \forall\, \varphi\in H_{0}^{1}(\Omega).
$$
Passing to the limit in $n\to\infty$ and using \eqref{ineq1}, \eqref{conv2} %\eqref{conv1} 
 and the Lebesgue Dominated Convergence Theorem, we get (hereafter $M_{\infty}:=M_{r_{\infty}}$)
\begin{equation}\label{equal.}
\int_{\Omega}\nabla u_{\#}\nabla \varphi dx+\int_{\Omega}M_{\infty}(u_{\#})\varphi dx=\int_{\Omega}w\varphi dx, \ \forall\, \varphi\in H_{0}^{1}(\Omega).
\end{equation}
Equality \eqref{equal.} implies $u_{\#}=u_{\infty}$.

Using again the second equation in \eqref{System} we get
\begin{eqnarray*}
\int_{\Omega}|\nabla u_{n}-\nabla u_{\infty}|^{2}dx&=&-\int_{\Omega}[M_{n}(u_{n})-M_{\infty}(u_{\infty})](u_{n}-u_{\infty})dx\\
&\leq&\frac{1}{\sqrt{\lambda_{1}}}|M_{n}(u_{n})-M_{\infty}(u_{\infty})|_{2}\left(\int_{\Omega}|\nabla u_{n}-\nabla u_{\infty}|^{2}dx\right)^{1/2}, \ \forall\,n\in\N.
\end{eqnarray*}
Thus,
\begin{equation}\label{ineq2}
\left(\int_{\Omega}|\nabla u_{n}-\nabla u_{\infty}|^{2}dx\right)^{1/2}\leq \frac{1}{\sqrt{\lambda_{1}}}|M_{n}(u_{n})-M_{\infty}(u_{\infty})|_{2}.
\end{equation}
From \eqref{conv2} and the continuity of $M$ (see Lemma \ref{le} (b)), we deduce
$$
|M_{n}(u_{n}(x))-M_{\infty}(u_{\infty}(x))|\to 0 \ \mbox{a.e. in $\Omega$}
$$
and, from \eqref{ineq1},
$$
|M_{n}(u_{n}(x))-M_{\infty}(u_{\infty}(x))|\leq 2|w|_{\infty}.
$$
So we can  use the Lebesgue Dominated Convergence Theorem in  \eqref{ineq2} and  deduce that
$$
S(r_{n})=\int_{\Omega}|\nabla u_{n}|^{2}dx\to \int_{\Omega}|\nabla u_{\infty}|^{2}dx=S(r_{\infty}),
$$
i.e. the map $S$ is continuous.
%\end{proof}
%\subsection{Proof  of Theorem \ref{th:main1}}
%As we already said, the idea is to show that 
%the continuous map  $S$ defined in Proposition \ref{prop}
%  has a fixed point. 

Clearly, being $u_{0}$ a nontrivial solution of (\ref{Pr}) with $r=0$, it holds
\begin{equation*}\label{9}
S(0)=\int_{\Omega}|\nabla u_{0}|^{2}dx>0.
\end{equation*}
On the other hand, by the definition of $u_{r}$, we can argue as in \eqref{end} to conclude that
$$
S(r)\leq\frac{1}{\lambda_{1}}|w|_{2}^{2}<r,
$$
for all $r$ large enough. We deduce that $S$ has a fixed point $r_{*}>0$.
%and then, for $u_{*} := u_{r_{*}}$, we have
%$$
%r_{\ast}=S(r_{\ast})=\int_{\Omega}|\nabla u_{\ast}|^{2}dx.
%$$
%In other words,  $u_{*}$ is  a solution of  \eqref{NP}.
\end{proof}
Of course to $r_{*}$ is associated $ u_{*} := u_{r_{*}}$ which is a solution 
of the problem \eqref{NP}, proving Theorem \ref{th:main1}.

%\begin{remark}
%\textcolor{red}{Observe that we have showed that the map $S$ is bounded.}
%\end{remark}

%%%%%%%%%%%%%%%%%%%%%%%%%%%%%%%%%%%%%%%%%%%%%%%%%
%%%%%%%%%%%%%%%%%%%%%%%%%%%%%%%%%%%%%%%%%%%%%%%%%
%%%%%%%%%%%%%%%%%%%%%%%%%%%%%%%%%%%%%%%%%%%%%%%%%
%%%%%%%%%%%%%%%%%%%%%%%%%%%%%%%%%%%%%%%%%%%%%%%%%
%%%%%%%%%%%%%%%%%%%%%%%%%%%%%%%%%%%%%%%%%%%%%%%%%
%%%%%%%%%%%%%%%%%%%%%%%%%%%%%%%%%%%%%%%%%%%%%%%%%
%%%%%%%%%%%%%%%%%%%%%%%%%%%%%%%%%%%%%%%%%%%%%%%%%
%%%%%%%%%%%%%%%%%%%%%%%%%%%%%%%%%%%%%%%%%%%%%%%%%

\section{The general case $f(x,u)$}\label{sec:general}
In this Section we consider the problem
\begin{equation}\label{general}
\left \{ \begin{array}{ll}
\Delta^{2}u-\text{div}\left(m\left (u, |\nabla u|_{2}^{2}\right)\nabla u\right)= f(x, u) & \mbox{in $\Omega$,}\\
u=\Delta u=0 & \mbox{on $\partial\Omega$.}
\end{array}\right.
\end{equation}
and again we start by looking for  a solution $u_{r}$ of the auxiliary problem
(for every  fixed $r\geq0$)
\begin{equation}\label{P'r}\tag{$P'_{r}$}
\left \{ \begin{array}{ll}
\Delta^{2}u-\text{div}\left(m_{r}(u)\nabla u\right)= f(x, u) & \mbox{in $\Omega$,}\\
u=\Delta u=0 & \mbox{on $\partial\Omega$}
\end{array}\right.
\end{equation}
associated. Then we show that the map $$S: r\longmapsto \int_{\Omega}|\nabla u_{r}|^{2}dx$$ 
 has a fixed point, which will give of course a solution of the original problem.

%Hereafter $\lambda_{1}$ is the first eigenvalue of $-\Delta$ in $H^{1}_{0}(\Omega)$ and $\gamma$ denotes a positive constant (independent of $h\in L^{\infty}(\Omega)$ and $u\in C^{1,\alpha}(\Omega)$, $0<\alpha<1$) such that
%$$
%\|u\|_{C^{1, \alpha}(\Omega)}\leq \gamma |h|_{\infty},
%$$
%where
%\begin{equation}
%\left \{ \begin{array}{ll}
%-\Delta u= g(x) & \mbox{in $\Omega$,}\\
%u=0 & \mbox{on $\partial\Omega$.}
%\end{array}\right.
%\end{equation}
%
%In this section we are considering same assumptions \eqref{m_{0}}-\eqref{m_{2}} on $m$, moreover, $f:\Omega\times \R\to\R$ will be a Carath\'eodory function satisfying:
%\begin{enumerate}[label=(f\arabic*),ref=f\arabic*,start=7]
%\item \label{f_1}$f(x, 0)\neq 0$, \smallskip
% \item\label{f_2} there exists $\mu\in L^{\infty}(\Omega)$ and $\nu\in (0, \min\{\lambda_{1}^{(1+3\delta)/2}/|\Omega|^{1-\delta}, \mathfrak m^{\delta}/\gamma\})$, such that 
%  $$ 
%  |f(x, t)|\leq \mu(x) + \nu |t|^{\delta }\quad \mbox{ a.e. in $\Omega$ and $\forall t\in \R$},
%  $$
%  where $\delta\in (0,1]$.
%  \item\label{f_3} there is $\theta\in (0, \lambda_{1}^{2})$ such
%  $$
%  |f(x, t_{1})-f(x, t_{2})|\leq \theta|t_{1}-t_{2}| \quad \mbox{a.e. in } \Omega,
%  $$
% for all $t_{1}, t_{2}\in \R$. 
% 
%    \end{enumerate}

%\begin{remark}\label{110}

\medskip

As before, after reducing problem \eqref{P'r} in the unique unknown $u$
to a system in the two unknowns $u,v$ and considering the same change of variable $z:=u, w:=M_{r}(u)-v$ made in  Section \ref{sec:f(x)}, we see that $u_{r}\in H_{0}^{1}(\Omega)\cap W^{3,2}(\Omega)\cap L^{\infty}(\Omega)$ solves \eqref{P'r} if, and only if, the pair $( z_{r},  w_{r})=(u_{r}, M_{r}(u_{r})-v_{r})
\in \left(H_{0}^{1}(\Omega)
\cap W^{3,2}(\Omega)\cap L^{\infty}(\Omega)\right)\times H_{0}^{1}(\Omega)$ solves the following system 
\begin{equation}\label{Sys}\tag{$S'_{r}$}
\left \{ \begin{array}{ll}
-\Delta w= f(x, z) & \mbox{in $\Omega$,}\\
-\Delta z+M_{r}(z)= w & \mbox{in $\Omega$,}\\
z=w=0 & \mbox{on $\partial\Omega$,}
\end{array}\right.
\end{equation}
%\end{remark}
%A weak solution of \eqref{Sys} is a pair $(\overline z, \overline  w)\in (H_{0}^{1}(\Omega)\cap L^{\infty}(\Omega))\times %H_{0}^{1}(\Omega)$ such that
that is
\begin{multline*}
  \int_{\Omega}\nabla  z_{r}\nabla \varphi_{1} dx+\int_{\Omega}M_{r}
 ( z_{r})\varphi_{1} dx+\int_{\Omega}\nabla  w_{r}\nabla \varphi_{2} dx \\
 =\int_{\Omega}f(x,  z_{r})\varphi_{2} dx+\int_{\Omega}  w_{r}\varphi_{1}dx, \ \ \forall  \varphi_{1}, \varphi_{2}\in H_{0}^{1}(\Omega).
\end{multline*}
In view of \eqref{m_{0}} and \eqref{f_2}, the identity above makes sense. We say that a weak solution $(z, w)$ of \eqref{Sys} is not trivial if $z$ and $w$ are not both zero. From \eqref{f_1}, system \eqref{Sys} does not admit the trivial solution.

In the following we denote with   with $\|\cdot\|$ the norm in $H^{1}_{0}(\Omega)$.

\medskip

\begin{proposition}\label{p1}
If \eqref{m_{0}}-\eqref{m_{1}} and \eqref{f_1}-\eqref{f_3} hold then, for each $r\geq 0$, system \eqref{Sys} has a unique nontrivial weak solution  $( z_{r},  w_{r})$.
Even more, it is $z_{r} \in H_{0}^{1}(\Omega)\cap W^{3,2}(\Omega)\cap C^{1, \alpha}(\overline{\Omega})$, for some $\alpha\in (0, 1)$.
\end{proposition}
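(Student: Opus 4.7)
The plan is a Banach contraction argument. Given $z\in H_0^1(\Omega)\cap L^\infty(\Omega)$, hypothesis \eqref{f_2} ensures $f(\cdot,z)\in L^\infty(\Omega)$, so the linear Dirichlet problem $-\Delta w=f(x,z)$, $w|_{\partial\Omega}=0$, admits a unique solution $w\in H_0^1\cap C^{1,\alpha}(\overline\Omega)$ with $\|w\|_{C^{1,\alpha}}\leq\gamma|f(\cdot,z)|_\infty$. With this $w\in L^\infty$ available, the second equation $-\Delta\tilde z+M_r(\tilde z)=w$ can be solved \emph{verbatim} as in Proposition~\ref{exis} (Step~1), producing a unique $\tilde z\in H_0^1\cap W^{3,2}\cap C^{1,\alpha}$ with the a priori estimate $|\tilde z|_\infty\leq|w|_\infty/\mathfrak m$ provided by Remark~\ref{rem1}. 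Define $T(z):=\tilde z$. Combining the last two inequalities with \eqref{f_2} one gets $|T(z)|_\infty\leq(\gamma/\mathfrak m)(|\mu|_\infty+\nu|z|_\infty^\delta)$; using the constraint $\nu<\mathfrak m^\delta/\gamma$ from \eqref{f_2}, the scalar map $R\mapsto(\gamma/\mathfrak m)(|\mu|_\infty+\nu R^\delta)$ admits a fixed point, so one can pick $R>0$ for which $T$ leaves the set $X_R:=\{z\in H_0^1(\Omega):|z|_\infty\leq R\}$ invariant. This set is closed in $H_0^1(\Omega)$, hence a complete metric space under the norm $\|\cdot\|$.

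To check contraction in $\|\cdot\|$, let $z_1,z_2\in X_R$ and write $w_i,\tilde z_i=T(z_i)$. Subtracting the first equations, testing with $w_1-w_2$ and using \eqref{f_3} and Poincar\'e yields
\begin{equation*}
\|w_1-w_2\|^2=\int_\Omega\bigl(f(x,z_1)-f(x,z_2)\bigr)(w_1-w_2)\,dx\leq\frac{\theta}{\lambda_1}\,\|z_1-z_2\|\,\|w_1-w_2\|,
\end{equation*}
so $\|w_1-w_2\|\leq(\theta/\lambda_1)\|z_1-z_2\|$. Subtracting the second equations, testing with $\tilde z_1-\tilde z_2$ and discarding the nonnegative term $\int(M_r(\tilde z_1)-M_r(\tilde z_2))(\tilde z_1-\tilde z_2)\,dx$ (monotonicity of $M_r$ granted by \eqref{m_{1}}) yields $\|\tilde z_1-\tilde z_2\|\leq(1/\lambda_1)\|w_1-w_2\|$. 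Composing, $\|T(z_1)-T(z_2)\|\leq(\theta/\lambda_1^2)\|z_1-z_2\|$, which is a strict contraction thanks to $\theta<\lambda_1^2$ from \eqref{f_3}. Banach's theorem then supplies a unique fixed point $z_r\in X_R$, and the pair $(z_r,w_r)$ with $w_r$ the solution of $-\Delta w_r=f(x,z_r)$ solves \eqref{Sys}. Applying the same two estimates to any two weak solutions automatically confined to $X_R$ by the above a priori bounds forces them to coincide, giving uniqueness.

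Nontriviality follows at once from \eqref{f_1}: if $z_r\equiv 0$ then $M_r(z_r)=0$ would force $w_r=0$ via the second equation, but $-\Delta w_r=f(\cdot,0)\not\equiv 0$ rules this out. The $W^{3,2}\cap C^{1,\alpha}$ regularity of $z_r$ is obtained as in Proposition~\ref{exis}, once we note that $w_r\in C^{1,\alpha}$ because $f(\cdot,z_r)\in L^\infty$. The delicate point I expect is the ball-invariance step: the precise numerical condition $\nu<\mathfrak m^\delta/\gamma$ appearing in \eqref{f_2} is exactly what forces the real-valued map $R\mapsto(\gamma/\mathfrak m)(|\mu|_\infty+\nu R^\delta)$ to have a finite fixed point, hence $T$ to preserve some $X_R$; without it the $L^\infty$ bounds on the iterates could blow up and the very existence of $\tilde z$ via Proposition~\ref{exis} would fail.
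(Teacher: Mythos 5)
Your proof is correct, but it follows a genuinely different route from the paper. The paper also builds the Picard sequence $z_{n+1}=T(z_n)$, but it never shows $T$ is a contraction for the existence part: instead it derives a priori bounds on the iterates in $H_0^1$ (this is where the hypothesis $\nu<\lambda_1^{(1+3\delta)/2}/|\Omega|^{1-\delta}$ is used) and in $L^\infty$ (via $\nu<\mathfrak m^{\delta}/\gamma$), extracts a weakly convergent subsequence, and passes to the limit in the two weak formulations by dominated convergence; the Lipschitz condition \eqref{f_3} enters only in the uniqueness step. You instead set up Banach's fixed point theorem on the ball $X_R$ (closed, hence complete, in $H_0^1$), proving invariance from the $L^\infty$ estimate $|T(z)|_\infty\le(\gamma/\mathfrak m)(|\mu|_\infty+\nu|z|_\infty^{\delta})$ and contractivity with constant $\theta/\lambda_1^2<1$ from \eqref{f_3} combined with the monotonicity of $M_r$ --- the same pair of estimates the paper reserves for uniqueness. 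This is cleaner: it dispenses with the weak-compactness passage to the limit (and with the condition $\nu<\lambda_1^{(1+3\delta)/2}/|\Omega|^{1-\delta}$, which your argument never needs), and it yields convergence of the whole iteration rather than a subsequence. The price is that your existence now genuinely depends on \eqref{f_3}, whereas the paper's existence argument uses only \eqref{f_2}. Two small inaccuracies worth fixing: first, the condition $\nu<\mathfrak m^{\delta}/\gamma$ is \emph{not} what makes the scalar map $R\mapsto(\gamma/\mathfrak m)(|\mu|_\infty+\nu R^{\delta})$ admit a fixed point when $\delta<1$ (then $R-cR^{\delta}\to\infty$ for any $c>0$, so invariance of some $X_R$ is free); it is needed only in the borderline case $\delta=1$. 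Second, in the uniqueness step there is no need to claim that arbitrary weak solutions are ``confined to $X_R$'' (for your particular $R$ they need not be): the two estimates $\|w_1-w_2\|\le(\theta/\lambda_1)\|z_1-z_2\|$ and $\|z_1-z_2\|\le\lambda_1^{-1}\|w_1-w_2\|$ apply verbatim to any pair of weak solutions in $\left(H_0^1(\Omega)\cap L^\infty(\Omega)\right)\times H_0^1(\Omega)$ and force coincidence directly, exactly as in the paper.
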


Although this proposition is the analogous of Proposition \ref{exis}, the proof (except the final part)
uses different arguments
and is more involved. 

\begin{proof}
%It is sufficient to prove existence and uniqueness of a weak solution $(z, w)$ to system \eqref{Sys}. 
%Here we will suppress the dependence on $r$.
Let us fix $r\geq0$. We begin by proving the existence, and then the uniqueness.
In particular in proving the existence  we will 
suppress the subscript $r$, so we will prove the existence of a solution which will be denoted
with $(\overline z, \overline w)$.

\medskip

{\bf Step 1: Existence}

\medskip

Let us fix an arbitrary $z_{1}\in H_{0}^{1}(\Omega)\cap L^{\infty}(\Omega)$ and plug it into the first equation of \eqref{Sys}. From \eqref{f_2}, by classical results there is a unique $w_{1}\in H_{0}^{1}(\Omega)\cap C(\overline{\Omega})$ such that
$$
\int_{\Omega}\nabla w_{1}\nabla\varphi dx=\int_{\Omega}f(x, z_{1})\varphi dx, \ \forall \, \varphi\in H_{0}^{1}(\Omega).
$$
Now putting this $w_{1}$ in the second equation of \eqref{Sys}, by  Proposition \ref{exis}, we get a unique $z_{2}\in H_{0}^{1}(\Omega)\cap W^{3, 2}(\Omega)\cap C^{1, \alpha}(\overline\Omega)$ such that
$$
\int_{\Omega}\nabla z_{2}\nabla \varphi dx+\int_{\Omega}M_{r}(z_{2})\varphi dx=\int_{\Omega}w_{1}\varphi, \ \forall\, \varphi\in H_{0}^{1}(\Omega).
$$
Continuing this process, we will get two sequences $\{z_{n}\}\subset H_{0}^{1}(\Omega)\cap W^{3, 2}(\Omega)\cap C^{1, \alpha}(\overline \Omega)$ and $\{w_{n}\}\subset H_{0}^{1}(\Omega)\cap C(\overline{\Omega})$ satisfying
\begin{equation}\label{conve1}
\int_{\Omega}\nabla w_{n}\nabla\varphi dx=\int_{\Omega}f(x, z_{n})\varphi dx, \ \forall \, \varphi\in H_{0}^{1}(\Omega),
\end{equation}
and
\begin{equation}\label{conve2}
\int_{\Omega}\nabla z_{n+1}\nabla \varphi dx+\int_{\Omega}M_{r}(z_{n+1})\varphi dx=\int_{\Omega}w_{n}\varphi dx, \ \forall \, \varphi\in H_{0}^{1}(\Omega).
\end{equation}

By choosing $\varphi=z_{n+1}$ in \eqref{conve2},  by the sign condition stated in Lemma \ref{le} (a), the H\"older and Poincar\'e inequalities, we get that %(hereafter $\|\cdot \|$ is the $H^{1}_{0}-$norm)
\begin{equation}\label{est1}
\|z_{n+1}\|\leq \frac{1}{\lambda_{1}}\|w_{n}\|, \ \forall \, n\in\N.
\end{equation}
On the other hand, choosing $\varphi=w_{n}$ in \eqref{conve1} and using \eqref{f_2} and \eqref{est1} we obtain
\begin{eqnarray*}
\|w_{n}\|^{2}&\leq& |\mu|_{\infty}\int_{\Omega}w_{n} dx + \nu\int_{\Omega}|z_{n}|^{\delta}w_{n} dx\\
&\leq & |\mu|_{\infty}|\Omega|^{1/2}|w_{n}|_{2} + \nu \left(\int_{\Omega}|z_{n}|^{2\delta}dx\right)^{1/2}|w_{n}|_{2}\\
&\leq& |\mu|_{\infty}|\Omega|^{1/2}|w_{n}|_{2}+\nu|\Omega|^{1-\delta}|z_{n}|_{2}^{\delta}|w_{n}|_{2}\\
&\leq& \left( \frac{|\mu|_{\infty}|\Omega|^{1/2}}{\lambda_{1}^{1/2}}+\frac{\nu|\Omega|^{1-\delta}}{\lambda_{1}^{(1+\delta)/2}}\|z_{n}\|^{\delta}\right)\|w_{n}\|\\
&\leq & \left(\frac{|\mu|_{\infty}|\Omega|^{1/2}}{\lambda_{1}^{1/2}}+\frac{\nu|\Omega|^{1-\delta}}{\lambda_{1}^{(1+3\delta)/2}}\|w_{n-1}\|^{\delta}\right)\|w_{n}\|.
\end{eqnarray*}
Therefore
\begin{equation}\label{finish}
\|w_{n}\|\leq C_{1}+C_{2}\|w_{n-1}\|^{\delta}, \ \forall \, n\geq 2,
\end{equation}
with $C_{1}=\max\{1, |\mu|_{\infty}|\Omega|^{1/2}/\lambda_{1}^{1/2}\}$ and $C_{2}=\nu|\Omega|^{1-\delta}/\lambda_{1}^{(1+3\delta)/2}$.
Observe that, from \eqref{finish},
using that $\delta\in(0,1]$ and $C_{1}\leq1,$ we have
\begin{eqnarray*}
\|w_{3}\|&\leq& C_{1}+C_{2}\left(C_{1}+C_{2}\|w_{1}\|^{\delta}\right)^{\delta}\\
&\leq& C_{1}+C_{1}C_{2}+C_{2}^{2}\|w_{1}\|^{\delta}.
\end{eqnarray*}
In an analogous way
$$
\|w_{4}\|\leq C_{1}+C_{1}C_{2}+C_{1}C_{2}^{2}+C_{2}^{3}\|w_{1}\|^{\delta},
$$
and consequently,
\begin{equation}\label{inequ}
\|w_{n}\|\leq C_{1}\sum_{k=0}^{n-2}C_{2}^{k}+ \|w_{1}\|^{\delta}C_{2}^{n-1}, \ \forall \,n\geq 2.
\end{equation}
Since $\nu\in (0, \lambda_{1}^{(1+3\delta)/2}/|\Omega|^{1-\delta})$ it follows from \eqref{inequ} that $\{w_{n}\}$, and consequently $\{z_{n}\}$ by \eqref{est1}, is bounded in $H_{0}^{1}(\Omega)$.
Hence there are $\overline w, \overline z\in H_{0}^{1}(\Omega)$ such that, passing to a subsequence, 
\begin{equation}\label{conver1}
w_{n}\rightharpoonup \overline w \ \mbox{and} \ z_{n}\rightharpoonup \overline z \ \mbox{in} \ H_{0}^{1}(\Omega),
\end{equation}
%\begin{equation}
%\textcolor{red}{tirar isso?}w_{n}\to w \ \mbox{and} \ z_{n}\to z \ \mbox{in} \ L^{2}(\Omega),
%\end{equation}
\begin{equation}\label{conver3}
w_{n}(x)\to \overline w(x) \ \mbox{and} \ z_{n}(x)\to \overline z(x) \ \mbox{a.e. in $\Omega$}
\end{equation}
and for some $g, h\in L^{2}(\Omega)$,
\begin{equation}\label{conver4}
|w_{n}(x)|\leq g(x) \ \mbox{and} \ |z_{n}(x)|\leq h(x) \ \mbox{a.e. in $\Omega$}.
\end{equation}

We are going to show that $(\overline z,\overline w)$ is the solution we were looking for.
From \eqref{conver1}, we have
\begin{equation}\label{101}
\int_{\Omega}\nabla w_{n}\nabla\varphi dx\to \int_{\Omega}\nabla \overline w\nabla\varphi dx, \ \forall\, \varphi\in H_{0}^{1}(\Omega)
\end{equation}
and
\begin{equation}\label{102}
\int_{\Omega}\nabla z_{n+1}\nabla \varphi dx \to \int_{\Omega}\nabla \overline z\nabla \varphi dx, \ \forall \, \varphi\in H_{0}^{1}(\Omega).
\end{equation}
Moreover by the Lebesgue Dominated Convergence Theorem  (possible in view of
\eqref{conver3}-\eqref{conver4}) we get
\begin{equation}\label{103}
\int_{\Omega}w_{n}\varphi dx\to \int_{\Omega}\overline w\varphi dx, \ \forall\, \varphi\in H_{0}^{1}(\Omega)
\end{equation}
and
%Since $f$ a Caratheodory function, it follows from \eqref{f_2}, \eqref{conver3}-\eqref{conver4} and Lebesgue Dominated Convergence Theorem that
\begin{equation}\label{104}
\int_{\Omega}f(x, z_{n})\varphi dx\to \int_{\Omega}f(x, \overline z)\varphi dx, \ \forall\, \varphi\in H_{0}^{1}(\Omega).
\end{equation}

From \eqref{f_2}, for each $n\in \N$, $f(., z_{n}(.))\in L^{\infty}(\Omega)$ and
$$
|f(., z_{n}(.))|_{\infty}\leq |\mu|_{\infty}+\nu|z_{n}|_{\infty}^{\delta},
$$
and then, by using  the definition of the  constant $\gamma$, it follows that
\begin{equation}\label{eq:estiman}
|w_{n}|_{\infty}\leq \gamma |f(., z_{n}(.))|_{\infty}\leq \gamma\left(|\mu|_{\infty}+\nu|z_{n}|_{\infty}^{\delta}\right).
\end{equation}
%where $\gamma$ does not depend of $n$.

Recalling Remark \ref{rem1}, we have
\begin{equation}\label{desi}
|M_{r}(z_{n+1})|\leq |w_{n}|_{\infty},
\end{equation} 
and also
\begin{equation}\label{est2}
|z_{n}|_{\infty}\leq \frac{1}{\mathfrak{m}}|w_{n-1}|_{\infty}, \ \forall \, n\geq 2
\end{equation}
which, joint with \eqref{eq:estiman} furnishes
$$
|w_{n}|_{\infty}\leq \gamma|\mu|_{\infty}+\frac{\nu \gamma}{\mathfrak{m}^{\delta}}|w_{n-1}|
_{\infty}^{\delta}\leq\tilde{\gamma}+\frac{\nu \gamma}{\mathfrak{m}^{\delta}}|w_{n-1}|
_{\infty}^{\delta}, \ 
\forall \, n\geq 2
$$
%and therefore
%$$
%|w_{n}|_{\infty}\leq \tilde{\gamma}+\frac{\nu \gamma}{\mathfrak{m}^{\delta}}|w_{n-1}|_{\infty}^{\delta}, \ \forall \ n\geq 
%2.
%$$
where we have set $\tilde{\gamma}=\max\{1, \gamma|\mu|_{\infty}\}$.

Now arguing as in \eqref{finish} we get
\begin{equation}\label{inequa}
|w_{n}|_{\infty}\leq \tilde{\gamma}\sum_{k=0}^{n-2}
\left(\frac{\nu \gamma}{\mathfrak m^{\delta}}\right)^{k}+ |w_{1}|_{\infty}^{\delta}\left(\frac{\nu \gamma}{\mathfrak 
m^{\delta}}\right)^{n-1}.
\end{equation}
Since by assumptions $\nu\in (0, \mathfrak m^{\delta}/\gamma)$, it follows by \eqref{inequa} that $\{w_{n}\}$, and 
consequently $\{z_{n}\}$ by \eqref{est2}, is bounded in $L^{\infty}(\Omega)$.

From \eqref{desi} and \eqref{inequa}, we conclude that there exists $C>0$ such that
\begin{equation}\label{inequal}
|M_{r}(z_{n})|_{\infty}\leq C, \ \forall \,n\in\N.
\end{equation}
Since $M_{r}$ is continuous, by  \eqref{conver3}, \eqref{inequal} and Lebesgue Dominated Convergence Theorem it 
follows that
\begin{equation}\label{105}
\int_{\Omega}M_{r}(z_{n+1})\varphi dx\to \int_{\Omega}M_{r}(\overline z)\varphi dx, \ \forall \, \varphi\in H_{0}^{1}(\Omega).
\end{equation}

The convergences in \eqref{conver3} and the boundedness of $\{z_{n}\}$ and $\{w_{n}\}$ in $L^{\infty}(\Omega)$ tell us that $\overline z, \overline w\in H_{0}^{1}(\Omega)\cap L^{\infty}(\Omega)$. Thereby,  from \eqref{conve1}-\eqref{conve2}, 
\eqref{101}-\eqref{104} and \eqref{105} it results that  the pair $(\overline z, \overline w)$ is a weak solution of system \eqref{Sys}.

\medskip

{\bf Step 2: Uniqueness}

\medskip

Suppose that $(z_{1}, w_{1})$ and $(z_{2}, w_{2})$ are two weak solutions of system \eqref{Sys}. Thus, for $i=1, 2$,
\begin{multline}\label{eq:uniq}
\int_{\Omega}\nabla z_{i}\nabla \varphi_{1} dx+\int_{\Omega}M_{r}(z_{i})\varphi_{1} dx+\int_{\Omega}\nabla w_{i}\nabla \varphi_{2} dx\\
=\int_{\Omega}f(x, z_{i})\varphi_{2} dx+\int_{\Omega}w_{i}\varphi_{1}dx, \  \forall \, \varphi_{1}, \varphi_{2}\in H_{0}^{1}(\Omega).
\end{multline}
Choosing $\varphi_{1}=z_{1}-z_{2}$, $\varphi_{2}=0$ in \eqref{eq:uniq} and subtracting the resulting
identities, we get
$$
\|z_{1}-z_{2}\|^{2}+\int_{\Omega}\left[ M_{r}(z_{1})-M_{r}(z_{2})\right](z_{1}-z_{2}) dx=\int_{\Omega}(w_{1}-w_{2})(z_{1}-z_{2}) dx.
$$
Since $M_{r}$ is increasing (see Lemma \ref{le}(a)) we obtain
$$
\|z_{1}-z_{2}\|^{2}\leq \int_{\Omega}|w_{1}-w_{2}||z_{1}-z_{2}| dx
$$
and consequently,
\begin{equation}\label{agora}
\|z_{1}-z_{2}\|\leq\frac{1}{\lambda_{1}}\|w_{1}-w_{2}\|.
\end{equation}
On the other hand, choosing $\varphi_{1}=0$, $\varphi_{2}=w_{1}-w_{2}$
in \eqref{eq:uniq} and subtracting the resulting
identities, we obtain
$$
\|w_{1}-w_{2}\|^{2}=\int_{\Omega}\left[ f(x, z_{1})-f(x, z_{2})\right](w_{1}-w_{2}) dx.
$$
By using hypothesis \eqref{f_3} and taking into account \eqref{agora} we get
$$
\|w_{1}-w_{2}\|^{2}\leq\theta\int_{\Omega}|w_{1}-w_{2}||z_{1}-z_{2}| dx
\leq\frac{\theta} {\lambda_{1}^{2}}\|w_{1}-w_{2}\|^{2}
$$
%Thus, from \eqref{agora} and previous inequality
%$$
%\|w_{1}-w_{2}\|\leq\frac{\theta}{\lambda_{1}^{2}}\|w_{1}-w_{2}\|.
%$$
or in other words,
\begin{equation*}\label{acabou}
\left(1-\frac{\theta}{\lambda_{1}^{2}}\right)\|w_{1}-w_{2}\|\leq 0.
\end{equation*}
Since by assumptions $\theta\in (0, \lambda_{1}^{2})$, it has to be $w_{1}=w_{2}$
and consequently $z_{1}=z_{2}$.
%, \eqref{acabou} and \eqref{agora}.

\medskip

The fact that $\overline z$ is actually in $H^{1}_{0}(\Omega)\cap W^{3,2}(\Omega)\cap C^{1,\alpha}(\overline \Omega)$ is seen  with same arguments  as in the proof of Proposition \ref{exis}.
\end{proof}

%The next proposition is a direct consequence of \eqref{f_2} and the arguments 
%are as in the proof of Proposition \ref{exis}.
%
%\begin{proposition}\label{p2}
%The weak solution $(\overline z, \overline w)$ of system \eqref{Sys} obtained in the previous Proposition \ref{p1}
%satisfies $(\overline z, \overline w)\in \left(H_{0}^{1}(\Omega)\cap W^{3,2}(\Omega)\cap C^{1, \alpha}(\overline{\Omega})\right)
%\times H_{0}^{1}(\Omega)$, for some $\alpha\in (0, 1)$. 
%\end{proposition}

As a consequence, since also in this case we have $z_{r}=u_{r}$, we have
 
\begin{proposition}
Under the conditions \eqref{m_{0}}-\eqref{m_{1}} and \eqref{f_1}-\eqref{f_3}, for each $r\geq 0$, problem \eqref{P'r} admits a unique nontrivial solution $u_{r}\in H_{0}^{1}(\Omega)\cap W^{3,2}(\Omega)\cap C^{1, \alpha}(\overline{\Omega})$, for some $\alpha\in (0, 1)$. 
\end{proposition}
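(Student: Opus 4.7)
The plan is to deduce the proposition directly from Proposition~\ref{p1} together with the change of variable machinery already set up in Section~\ref{sec:f(x)} (and recalled at the beginning of Section~\ref{sec:general}). Since the subscript $r\geq 0$ is fixed, I would suppress it in what follows and only restore it at the end.

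\textbf{Step 1: pass from the system to the fourth-order problem.} By Proposition~\ref{p1} the system \eqref{Sys} admits a unique nontrivial weak solution $(z_r,w_r)\in \bigl(H^{1}_{0}(\Omega)\cap W^{3,2}(\Omega)\cap C^{1,\alpha}(\overline\Omega)\bigr)\times H^{1}_{0}(\Omega)$. Set $u_r:=z_r$ and $v_r:=M_r(z_r)-w_r$. Since $z_r\in L^{\infty}(\Omega)$ and $M$ is continuous (Lemma~\ref{le}~(b)), we have $M_r(z_r)\in L^{\infty}(\Omega)\subset L^{2}(\Omega)$; and since $z_r\in W^{3,2}(\Omega)\cap L^{\infty}(\Omega)$ one checks that $M_r(z_r)\in H^{1}_{0}(\Omega)$ (the boundary trace vanishes because $z_r|_{\partial\Omega}=0$ and $M_r(0)=0$, and the gradient $m_r(z_r)\nabla z_r$ lies in $L^{2}(\Omega)$). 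Consequently $v_r\in H^{1}_{0}(\Omega)$, and rewriting \eqref{Sys} in terms of $(u_r,v_r)$ gives exactly the system \eqref{star} (with $f(x)$ replaced by $f(x,u_r)$). Reading the second equation of \eqref{star} we obtain $v_r=\Delta u_r$, so $u_r\in W^{3,2}(\Omega)$ is automatic and a direct computation (as in the proposition comparing \eqref{Pr} and \eqref{star}, now with the nonlinear right-hand side $f(x,u_r)$ which is in $L^{\infty}(\Omega)$ by \eqref{f_2} and the $L^{\infty}$-bound on $u_r$) shows that $u_r$ is a weak solution of \eqref{P'r}. The regularity $u_r\in H_{0}^{1}(\Omega)\cap W^{3,2}(\Omega)\cap C^{1,\alpha}(\overline\Omega)$ is inherited from the corresponding regularity of $z_r$ in Proposition~\ref{p1}.

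\textbf{Step 2: nontriviality and uniqueness.} Nontriviality is immediate: if $u_r\equiv 0$ then the first equation of \eqref{P'r} would force $f(\cdot,0)\equiv 0$, contradicting \eqref{f_1}. For uniqueness, suppose $\tilde u_r\in H_{0}^{1}(\Omega)\cap W^{3,2}(\Omega)\cap L^{\infty}(\Omega)$ is another weak solution of \eqref{P'r}. Setting $\tilde z_r:=\tilde u_r$ and $\tilde w_r:=M_r(\tilde u_r)-\Delta \tilde u_r$, the same computation as in the equivalence proposition of Section~\ref{sec:f(x)} shows that $\tilde w_r\in H^{1}_{0}(\Omega)$ and that $(\tilde z_r,\tilde w_r)$ is a weak solution of \eqref{Sys}. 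By the uniqueness part of Proposition~\ref{p1}, $(\tilde z_r,\tilde w_r)=(z_r,w_r)$, hence $\tilde u_r=u_r$.

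\textbf{Main obstacle.} None of the steps involves new analysis; the only slightly delicate point is verifying that the pair produced by the change of variable genuinely lies in the right function spaces (in particular the $H^{1}_{0}$-membership of $M_r(z_r)$ and of $v_r=M_r(z_r)-w_r$, which is needed to invoke the equivalence between \eqref{Sys} and \eqref{P'r}). This is routine thanks to the $L^{\infty}$-bound on $z_r$, the continuity of $m_r$, and the regularity $z_r\in W^{3,2}(\Omega)$ already supplied by Proposition~\ref{p1}. The proposition therefore follows almost as a corollary of Proposition~\ref{p1}.
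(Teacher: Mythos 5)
Your proposal is correct and follows the same route as the paper, which states this proposition as an immediate consequence of Proposition \ref{p1} via the equivalence $z_{r}=u_{r}$, $w_{r}=M_{r}(u_{r})-\Delta u_{r}$ between \eqref{Sys} and \eqref{P'r} set up in Section \ref{sec:f(x)}. You merely spell out the routine membership checks (e.g.\ $M_{r}(z_{r})\in H^{1}_{0}(\Omega)$) and the back-and-forth for uniqueness that the paper leaves implicit.
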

%
%\begin{proof}
%It follows from Propositions \ref{p1}, \ref{p2} and from Remark \ref{110}.
%\end{proof}

The next proposition is the analogous of Proposition \ref{prop}.

\begin{proposition}\label{prop2}
If \eqref{m_{0}}-\eqref{m_{2}} and \eqref{f_1}-\eqref{f_3} hold and $u_{r}\in H_{0}^{1}(\Omega)\cap W^{3,2}(\Omega)
\cap C^{1, \alpha}(\overline{\Omega})$ is the solution of \eqref{P'r} then the map
 $$S: r\in[0,+\infty)\longmapsto \int_{\Omega}|\nabla u_{r}|^{2}dx\in \mathbb [0,+\infty)$$ is continuous, bounded and 
 has a  fixed point $r_{*}$.
\end{proposition}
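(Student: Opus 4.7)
The strategy parallels the proof of Proposition \ref{prop}: establish that $S$ is bounded and continuous on $[0,+\infty)$, and then produce the fixed point by a one-dimensional intermediate value argument. The novel difficulty compared with the case $f=f(x)$ is that the auxiliary function $w_r$ in \eqref{Sys} no longer decouples from $r$, since $-\Delta w_r=f(x,u_r)$. Fortunately, all uniform-in-$r$ estimates we shall need have already been produced inside the proof of Proposition \ref{p1}.

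\textbf{Boundedness of $S$.} The estimates \eqref{est1}--\eqref{inequ}, combined with $\nu<\lambda_{1}^{(1+3\delta)/2}/|\Omega|^{1-\delta}$, provide $H_0^1$-bounds for the Picard iterates $\{z_n\},\{w_n\}$ that are independent of both $n$ and $r$; passing to the limit $n\to\infty$ gives $\|u_r\|\leq K$ for every $r\geq 0$, and hence $S(r)\leq K^2$. Similarly, the sharper estimate \eqref{inequa} (which uses $\nu<\mathfrak m^{\delta}/\gamma$) together with \eqref{desi} yields a uniform $L^\infty$ bound $|M_{r}(u_{r})|_{\infty}\leq C_{0}$, which will be essential in the continuity argument.

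\textbf{Continuity of $S$.} Let $r_n\to r_\infty$, put $u_n:=u_{r_n}$, $w_n:=w_{r_n}$, $M_n:=M_{r_n}$, and write $(u_\infty,w_\infty)$ for the solution at $r_\infty$. From the uniform bounds and Rellich--Kondrachov, up to a subsequence,
$$u_n\rightharpoonup u_\#,\ w_n\rightharpoonup w_\# \text{ in } H_0^1(\Omega),\quad u_n\to u_\#,\ w_n\to w_\# \text{ in } L^s(\Omega),\ s\in[1,2^*),$$
with pointwise a.e.\ convergence and a common $L^2$/$L^\infty$ dominant. We then pass to the limit in each equation of \eqref{Sys} at $r_n$: linear terms converge by weak convergence; $\int_{\Omega}M_n(u_n)\varphi\,dx\to\int_{\Omega}M_{r_\infty}(u_\#)\varphi\,dx$ thanks to continuity of $M$ (Lemma \ref{le}(b)), the uniform bound $|M_n(u_n)|_\infty\leq C_0$, and Lebesgue dominated convergence; and $\int_{\Omega}f(x,u_n)\varphi\,dx\to\int_{\Omega}f(x,u_\#)\varphi\,dx$ by the Carath\'eodory hypothesis, \eqref{f_2} and dominated convergence. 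Thus $(u_\#,w_\#)$ solves \eqref{Sys} at $r_\infty$, and uniqueness from Proposition \ref{p1} identifies it with $(u_\infty,w_\infty)$, so the convergence holds along the full sequence. To upgrade to $S(r_n)\to S(r_\infty)$, we test the difference of the second equations at $r_n$ and $r_\infty$ with $\varphi=u_n-u_\infty$ and exploit the $L^\infty$-bound on $M_n(u_n)-M_{r_\infty}(u_\infty)$ together with the strong $L^2$-convergences to conclude $\|u_n-u_\infty\|\to 0$.

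\textbf{Fixed point.} Since $S(r)\leq K^2$, clearly $S(r)<r$ whenever $r>K^2$. On the other hand, \eqref{f_1} prevents the solution of \eqref{P'r} at $r=0$ from vanishing, so $S(0)>0$. The continuous function $r\mapsto S(r)-r$ is positive at $r=0$ and negative at $r=K^2+1$, and the intermediate value theorem produces $r_*\in(0,K^2+1]$ with $S(r_*)=r_*$. The principal technical obstacle is the simultaneous passage to the limit in $M_n(u_n)$ and $f(x,u_n)$, which crucially requires the uniform $L^\infty$ bound on $\{u_n\}$ delivered by \eqref{f_2}.
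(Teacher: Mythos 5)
Your proposal is correct and follows the same overall skeleton as the paper's proof: uniform-in-$r$ bounds inherited from the iteration scheme of Proposition \ref{p1}, extraction of weak/a.e.\ limits, identification of the limit via uniqueness, upgrade to strong $H^1_0$ convergence, and an intermediate value argument using $S(0)>0$ and $S(r)<r$ for large $r$. The one step you handle genuinely differently is the upgrade from weak to strong convergence of $u_n=u_{r_n}$: the paper tests the difference of the second equations in \eqref{Sys}, then closes a coupled estimate by bounding $\|w_n-w_\infty\|\leq (\theta/\lambda_1)\|u_n-u_\infty\|$ via \eqref{f_3} and absorbing it using $\theta<\lambda_1^{2}$, finally sending $|M_n(u_n)-M_\infty(u_\infty)|_2\to0$ by dominated convergence (see \eqref{abc}--\eqref{acs}). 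You instead estimate both right-hand terms of the tested identity by Cauchy--Schwarz, using only the uniform $L^\infty$ bound \eqref{imp} on $M_n(u_n)$ together with the strong $L^2$ convergence of $u_n$ and $w_n$ already supplied by Rellich and the limit identification; this is more elementary, does not require \eqref{f_3} at that stage, and suffices because the $M$-term need only stay bounded rather than vanish. Both routes are valid; the paper's absorption argument is the one that genuinely exploits the smallness condition $\theta<\lambda_1^2$, whereas yours leans more heavily on compactness. Your derivation of the uniform bound on $S$ by letting $n\to\infty$ in \eqref{inequ} (using $C_2<1$) is also legitimate, though the paper prefers to re-derive it directly from the equations satisfied by $(u_r,w_r)$ via the self-improving inequality $\|w_r\|\leq a+b\|w_r\|^{\delta}$.
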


\begin{proof}
Let $\{r_{n}\}$ be a sequence in $ [0,\infty)$ such that $r_{n}\to r_{\infty}$. Denoting $(u_{n}, w_{n})\in \left(H_{0}^{1}(\Omega)\cap W^{3,2}(\Omega)\cap C^{1, \alpha}(\overline{\Omega})\right)\times H_{0}^{1}(\Omega)$ the unique solution of system \eqref{Sys}
with $r=r_{n}, n\in \mathbb N$ and recalling that $z_{r}=u_{r}$, we have
\begin{equation*}\label{con1}
\int_{\Omega}\nabla w_{n}\nabla\varphi dx=\int_{\Omega}f(x, u_{n})\varphi dx, \ \forall \ \varphi\in H_{0}^{1}(\Omega)
\end{equation*}
and
\begin{equation*}\label{con2}
\int_{\Omega}\nabla u_{n}\nabla \varphi dx+\int_{\Omega}M_{n}(u_{n})\varphi dx=\int_{\Omega}w_{n}\varphi dx, \ \forall \ \varphi\in H_{0}^{1}(\Omega),
\end{equation*}
where $M_{n}:=M_{r_{n}}$. Arguing exactly as in the proof of Proposition \ref{p1} (see \eqref{conve2}-\eqref{inequ}), we conclude that $\{u_{n}\}$ and $\{w_{n}\}$ are bounded in $H_{0}^{1}(\Omega)$.

Hence, there are $w_{\#},  u_{\#}\in H_{0}^{1}(\Omega)$ such that, passing to a subsequence, 
\begin{equation*}%\label{co1}
w_{n}\rightharpoonup  w_{\#} \ \mbox{and} \ u_{n}\rightharpoonup  u_{\#}  \ \mbox{in} \ H_{0}^{1}(\Omega),
\end{equation*}
\begin{equation*}
w_{n}\to  w_{\#} \ \mbox{and} \ u_{n}\to  u_{\#} \ \mbox{in} \ L^{2}(\Omega),
\end{equation*}
\begin{equation}\label{co3}
w_{n}(x)\to  w_{\#}(x) \ \mbox{and} \ u_{n}(x)\to  u_{\#}(x) \ \mbox{a.e. in $\Omega$}
\end{equation}
and
\begin{equation*}\label{co4}
|w_{n}(x)|\leq g(x) \ \mbox{and} \ |u_{n}(x)|\leq h(x) \ \mbox{a.e. in $\Omega$},
\end{equation*}
where $g, h\in L^{2}(\Omega)$.

Again, by following the arguments in \eqref{101}-\eqref{105} and remembering that system \eqref{Sys}, with $r=r_{\infty}$, 
has a unique solution $(u_{\infty}, w_{\infty})$, we conclude that $ u_{\#}=u_{\infty}$ and $ w_{\#}=w_{\infty}$. 
Note that
\begin{equation}\label{imp}
|M_{n}(u_{n})|\leq |w_{n}|_{\infty}\leq C, \ \forall \, n\geq 0.
\end{equation}

Using again the second equation in \eqref{Sys} we get (hereafter $M_{\infty} := M_{r_{\infty}}$)
\begin{equation}\label{eq}
\|u_{n}-u_{\infty}\|^{2}=-\int_{\Omega}[M_{n}(u_{n})-M_{\infty}(u_{\infty})](u_{n}-u_{\infty})dx+\int_{\Omega}(w_{n}-w_{\infty})(u_{n}-u_{\infty}) dx.
\end{equation}
Thus,
\begin{equation}\label{eq}
\|u_{n}-u_{\infty}\|\leq \frac{1}{\sqrt{\lambda_{1}}}|M_{n}(u_{n})-M_{\infty}(u_{\infty})|_{2}+\frac{1}{\lambda_{1}}\|w_{n}-w_{\infty}\|.
\end{equation}

On the other hand, using the first equation in \eqref{Sys}, it follows that
$$
\|w_{n}-w_{\infty}\|^{2}=\int_{\Omega}\left(f(x, u_{n})-f(x, u_{\infty})\right)(w_{n}-w_{\infty}) dx
$$
and then by \eqref{f_3}, we obtain
\begin{equation}\label{abc}
\|w_{n}-w_{\infty}\|\leq\frac{\theta}{\lambda_{1}}\|u_{n}-u_{\infty}\|.
\end{equation}
Replacing \eqref{abc} in \eqref{eq}, 
\begin{equation}\label{acs}
\|u_{n}-u_{\infty}\|\leq \frac{1}{\sqrt{\lambda_{1}}}|M_{n}(u_{n})-M_{\infty}(u_{\infty})|_{2}+\frac{\theta}{\lambda_{1}^{2}}\|u_{n}-u_{\infty}\|.
\end{equation}

Therefore,
\begin{equation*}
\left(1-\frac{\theta}{\lambda_{1}^{2}}\right)\|u_{n}-u_{\infty}\|\leq
 \frac{1}{\sqrt{\lambda_{1}}}|M_{n}(u_{n})-M_{\infty}(u_{\infty})|_{2},
\end{equation*}
where $\theta\in (0, \lambda_{1}^{2})$.

From \eqref{co3} and Lemma \ref{le}(b), 
$$
|M_{n}(u_{n}(x))-M_{\infty}(u_{\infty}(x))|\to 0 \ \mbox{a.e. in $\Omega$}.
$$
On the other hand, from \eqref{imp}
$$
|M_{n}(u_{n}(x))-M_{\infty}(u_{\infty}(x))|\leq C.
$$
Therefore, by using Lebesgue Dominated Convergence Theorem we conclude from \eqref{acs} that
$$
S(r_{n})=\int_{\Omega}|\nabla u_{n}|^{2}dx\to \int_{\Omega}|\nabla u_{\infty}|^{2}dx=S(r_{\infty}).
$$
Showing that $S$ is continuous.
%Finally, for short, we use again the symbols $u_{0}$ and $w_{0}$, at this time, to denote the unique nontrivial 
%solution of system \eqref{Sys}, with $r=0$. 
%Thus %\textcolor{red}{confu\c c\~ao entre $u_{0}$....}
%$$
%S(0)=\int_{\Omega}|\nabla u_{0}|^{2}dx>0.
%$$
On the  other side, by using the sign condition,
$$
S(r)\leq \int_{\Omega}w_{r}u_{r} dx\leq\frac{1}{\lambda_{1}}\|w_{r}\|\|u_{r}\|.
$$
Thus,
\begin{equation}\label{ufa}
\|u_{r}\|\leq \frac{1}{\lambda_{1}}\|w_{r}\|.
\end{equation}
Moreover, the first equation in \eqref{Sys} and hypothesis \eqref{f_3} give
$$
\|w_{r}\|^{2}\leq\theta\int_{\Omega}|w_{r}||\mu| dx+\nu\int_{\Omega}|u_{r}|^{\delta}|w_{r}|.
$$
Using \eqref{ufa} and arguing exactly as in the proof of \eqref{finish}, we get
$$
\|w_{r}\|\leq \frac{|\mu|_{\infty}|\Omega|^{1/2}}{\lambda_{1}^{1/2}}+\frac{\nu|\Omega|^{1-\delta}}{\lambda_{1}^{(1+3\delta)/2}}\|w_{r}\|^{\delta}, \ \forall \, r\geq 0.
$$
Consequently $S$ is bounded and there is $R>0$, large enough, such that 
$S(R)<R$. 
Since
$$S(0)=\int_{\Omega}|\nabla u_{0}|^{2}dx>0,$$
the existence of a positive fixed point $r_{*}$ is guaranteed.
\end{proof}

Again to $r_{*}$ is associates $ u_{*} := u_{r_{*}}$ which gives a solution 
of  problem \eqref{general}, proving Theorem \ref{th:main2}.

%The fixed point provided in Proposition \ref{prop2} gives a solution of \eqref{general},
%completing the proof of Theorem \ref{th:main2}.

\end{document}